\DeclareMathOperator{\Diff}{Diff}
\DeclareMathOperator{\SU}{SU}
\DeclareMathOperator{\SL}{SL}
\DeclareMathOperator{\Sym}{Sym}
\newcommand{\C}{\mathbb C}
\newcommand{\N}{\mathbb N}
\newcommand{\id}{\mathrm id}
\def\C{\mathbb{C}}
\def\N{\mathbb{N}}
\def\cL{{\mathcal L}}
\newcommand{\diff}{\text{\rm d}}
\theoremstyle{plain}
\newtheorem{theorem}{Theorem}
\newtheorem{proposition}[theorem]{Proposition}
\newtheorem{lemma}[theorem]{Lemma}
\newtheorem{conjecture}[theorem]{Conjecture}
\theoremstyle{definition}
\newtheorem{definition}[theorem]{Definition}
\newtheorem{remark}[theorem]{Remark}
\newtheorem{example}[theorem]{Example}
\theoremstyle{plain}
\newtheorem*{solution*}{Solution}
\newtheorem*{theorem*}{Theorem}
\newtheorem*{proposition*}{Proposition}
\newtheorem*{lemma*}{Lemma}
\newtheorem*{corollary*}{Corollary}
\newtheorem*{conjecture*}{Conjecture}
\theoremstyle{definition}
\newtheorem*{definition*}{Definition}
\newtheorem*{remark*}{Remark}
\newtheorem*{remarks*}{Remarks}
\numberwithin{equation}{section}
\numberwithin{theorem}{section}
\author[J. Fine]{Joel Fine}
\address{D\'epartement de Math\'ematiques, Universit\'e libre de Bruxelles, Brussels, Belgium}
\email{joel.fine@ulb.be}
\author[W.-Y. He]{Weiyong He}
\address{Department of Mathematics, University of Oregon, Eugene, OR 97403, USA}
\email{whe@uoregon.edu}
\author[C.-J. Yao]{Chengjian Yao}
\address{Institute of Mathematical Sciences, ShanghaiTech University, Pudong New District,
	Shanghai, 201210, China.}
\email{yaochj@shanghaitech.edu.cn}
\begin{document}
	\title{Hypersymplectic Structures Invariant Under an Effective Circle Action}
%	 \author{Joel Fine, Weiyong He, Chengjian Yao}
%	\date{ }

\begin{abstract}
	A hypersymplectic structure on a 4-manifold is a triple of symplectic forms for which any non-zero linear combination is again symplectic. In 2006, Donaldson \cite{Don0} conjectured that on a compact 4-manifold any hypersymplectic structure can be deformed through cohomologous hypersymplectic structures to a hyperkähler triple. We prove this under the assumption that the initial structure is invariant under an effective $S^1$-action. In particular we show that the underlying 4-manifold is diffeomorphic to $\mathbb{T}^4$. 
\end{abstract}

\maketitle

\section{Introduction}

Let $X$ be a 4-manifold. A \emph{hypersymplectic structure} on $X$ is a triple $\underline{\omega} = (\omega_1, \omega_2, \omega_3)$ of symplectic forms with the property that every non-zero linear combination $a^i \omega_i$ is again symplectic. (We use the summation convention throughout this article, summing repeated indices over 1,2,3.) In \cite{Don0}, Donaldson made the following conjecture (cf.~\cite[Conjecture 1.1]{FY}).

\begin{conjecture}\label{Donaldson-conjecture}
Let $\underline{\omega} = (\omega_1, \omega_2, \omega_3)$ be a hypersymplectic structure on a compact 4-manifold $X$ with $\int \omega_i \wedge \omega_j = 2 \delta_{ij}$. Then $\underline{\omega}$ can be deformed through cohomologous hypersymplectic structures to the triple of K\"ahler forms coming from a hyperk\"ahler metric on $X$. In particular, $X$ is diffeomorphic to $\mathbb{T}^4$ or a K3 surface. 
\end{conjecture}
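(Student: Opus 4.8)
The conjecture asks for two things: a \emph{deformation} of $\underline{\omega}$ to a hyperkähler triple, and the resulting \emph{classification} of $X$. My plan treats them in that order, without assuming any symmetry of $\underline{\omega}$. The normalisation $\int_X \omega_i\wedge\omega_j = 2\delta_{ij}$ is precisely the cohomological shadow of a hyperkähler frame, for which $\omega_i\wedge\omega_j = 2\delta_{ij}\,\dvol$ holds pointwise; it fixes the target and is automatically retained by any deformation that keeps the $\omega_i$ in their de Rham classes. The entire difficulty is concentrated in the deformation.

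For the deformation I would run the \emph{hypersymplectic flow} of Donaldson \cite{Don0}, realised by Fine--Yao \cite{FY} as the dimensional reduction of Bryant's Laplacian flow for closed $G_2$-structures. Indeed, a hypersymplectic triple on $X$ is the same datum as a $\mathbb{T}^3$-invariant closed definite $3$-form $\varphi$ on $X\times\mathbb{T}^3$ (schematically $\varphi = \mathrm{vol}_{\mathbb{T}^3} - dt^i\wedge\omega_i$), its induced metric is Riemannian, and the Laplacian flow $\partial_t\varphi = \Delta_\varphi\varphi$ preserves $\mathbb{T}^3$-invariance, hence descends to a parabolic evolution of $(\omega_1,\omega_2,\omega_3)$ whose right-hand sides are exact. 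Consequently the flow fixes each class $[\omega_i]$ and the pairing $\int_X\omega_i\wedge\omega_j$, and by openness of the hypersymplectic condition it keeps the triple hypersymplectic as long as it stays smooth. Short-time existence from arbitrary smooth data is parabolic (DeTurck gauge-fixing, equivalently Bryant--Xu for the Laplacian flow), and the stationary triples are exactly the torsion-free, i.e.\ hyperkähler, ones. This reduces the conjecture to the global behaviour of the flow.

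The analytic heart --- and the step I expect to be the main obstacle --- is long-time existence and convergence with no symmetry available. I would aim to prove that the flow exists for all time and converges, modulo diffeomorphisms isotopic to the identity (so that cohomology is retained), to a hyperkähler triple $\underline{\omega}_\infty$. Two ingredients are needed. First, a variational mechanism: the Hitchin volume functional increases along the flow and its critical points inside a fixed cohomology class are precisely the hyperkähler triples, which should force subconvergence at infinity via a {\L}ojasiewicz--Simon inequality. Second, and decisively, a priori control of the full curvature/torsion so as to exclude finite-time singularities --- for instance a bounded-scalar-curvature extension criterion combined with an $\varepsilon$-regularity and no-collapsing statement, ruling out the formation of non-flat shrinking or ancient blow-up models (expected to be ALE or cylindrical hyperkähler). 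It is exactly this global regularity that is unavailable in general: an effective $S^1$-action would collapse the system to a tractable lower-dimensional PDE, which is why the symmetric case is within reach while the full conjecture remains open. I would attack it through a small-energy doubling-time estimate for $\Rm$ and a classification of the possible blow-up limits.

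Once $\underline{\omega}_\infty$ is hyperkähler, the classification is classical. The induced metric $g_\infty$ has holonomy in $\SU(2)=\Sp(1)$, so $(X,g_\infty)$ is a compact Ricci-flat Kähler surface with trivial canonical bundle; by the Enriques--Kodaira classification together with Yau's theorem it is a complex torus or a K3 surface, giving $X\cong\mathbb{T}^4$ or $X$ diffeomorphic to K3. This is where the K3 alternative enters; the symmetric case produces only $\mathbb{T}^4$, since a compact hyperkähler surface carrying a nontrivial triple-preserving circle action must be flat.
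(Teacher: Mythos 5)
There is a genuine gap, and you have in fact named it yourself: your second and third paragraphs do not prove long-time existence and convergence of the hypersymplectic flow, they only describe what such a proof would need (``the step I expect to be the main obstacle'', ``I would aim to prove'', ``should force subconvergence''). Every other step in your outline is standard --- short-time existence in DeTurck/Bryant--Xu gauge, exactness of the right-hand side so the classes $[\omega_i]$ are fixed, openness of the hypersymplectic condition, and the classification of compact hyperk\"ahler 4-manifolds as $\mathbb{T}^4$ or K3 once a hyperk\"ahler triple is in hand --- so the entire content of the statement is concentrated in precisely the step you defer. None of the ingredients you invoke for it exist at the required strength: the Fine--Yao extension result assumes bounded scalar curvature and is a conditional continuation criterion, not an a priori estimate; a {\L}ojasiewicz--Simon convergence statement for the Laplacian/hypersymplectic flow is only known near torsion-free structures (dynamical stability), not from arbitrary initial data; and no classification of blow-up models is available. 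In short, what you have written is a correct research program, not a proof --- which is consistent with the status of the statement: it is Conjecture~\ref{Donaldson-conjecture}, and the paper itself states that it ``remains largely open'' and does not prove it.

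What the paper actually proves is only the special case of Theorem~\ref{main-theorem}, where $\underline{\omega}$ is invariant under an effective $S^1$-action, and it does so by a route entirely different from yours, avoiding the flow: the canonical decomposition of $\underline{\omega}$ into structural data $\left(\alpha, \alpha^i, \sigma\right)$ (Lemma~\ref{representation:canonical}), the observation that $\alpha^1\otimes\alpha^1+\alpha^2\otimes\alpha^2+\alpha^3\otimes\alpha^3$ is a flat metric on $Y=X/S^1$ forcing $Y\cong\mathbb{T}^3$, a topological argument ($b_1(X)=4$ via Li's constraints on symplectic Calabi--Yau surfaces plus the Gysin sequence) showing the bundle is trivial and $X\cong\mathbb{T}^4$, and then an explicit \emph{linear} interpolation $\left(1-s\right)\underline{\omega}+s\,\underline{\omega}^{\mathbf{B}}$ to a hyperk\"ahler triple, with $\mathbf{B}$ chosen by the period computation~\eqref{def:B} so that cohomology classes are preserved; effectiveness is reduced to freeness via Lemma~\ref{lemma:isolated-zero}, Atiyah--Hirzebruch, and Poincar\'e--Hopf. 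Note also that in this symmetric case only $\mathbb{T}^4$ occurs (as you correctly anticipate at the end), whereas the K3 alternative in the conjecture is genuinely present in the general case your proposal targets --- and for that general case no argument, yours included, currently closes the analytic gap.
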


The above conjecture remains largely open. To place this conjecture in a broader context, note that a hypersymplectic manifold automatically has zero first Chern class (see, e.g., the second page of \cite{FY} and \cite[Lemma 4]{FY2}, essentially because that the orthogonal complement of $\omega_1$ in $\text{Span}\{\omega_1, \omega_2,\omega_3\}$ with respect to the wedge product gives the trivial canonical bundle of $\left(X,J_1\right)$ for almost complex structure $J_1$ compatible with $\omega_1$). The classification of symplectic 4-manifolds with $c_1=0$ is an important problem that, despite much progress, appears to be currently out of reach. Donaldson's conjecture can be seen as a special and hopefully more tractable case of this much more difficult problem. In \cite{FY}, the first and the third authors introduced a geometric flow, called the \emph{hypersymplectic flow}, to attack Conjecture~\ref{Donaldson-conjecture}. The strategy is to evolve the given hypersymplectic structure in a canonical way towards a hyperk\"ahler structure.  The main problem is then to show that the flow exists for all time and converges to a hyperk\"ahler structure. However, it is challenging to understand the long time existence and convergence of the flow in general; see \cite{FY2}. To date, the flow has only been used to confirm the conjecture in very restricted and symmetric cases, with the main feature of either assuming an explicit $\mathbb{T}^3$-symmetry  on $\mathbb{T}^4$ \cite{FHY, HWY}, or assuming the hypersymplectic structure is of ``K\"ahler type'' \cite{PS}, as explained in \cite[Section 1.3]{FHY}. 

The main result of this paper is to prove Conjecture~\ref{Donaldson-conjecture} in the special case when the hypersymplectic structure is invariant under an effective circle action on a compact $4$-manifold.
\begin{theorem}\label{main-theorem}
    Let $X$ be a compact smooth $4$-manifold admitting a smooth hypersymplectic structure with $\int \omega_i \wedge \omega_j = 2 \delta_{ij}$. Suppose moreover that $\underline{\omega}$ is invariant under an effective smooth $S^1$-action. Then $X$ is diffeomorphic to $\mathbb{T}^4$. Moreover, there is a linear isotopy from $\underline\omega$ to a hyperk\"ahler triple through a family of hypersymplectic structures in the same cohomology class.
\end{theorem}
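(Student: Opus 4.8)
The plan is to exploit the generating vector field $V$ of the $S^1$-action together with the three closed $1$-forms it produces, first to pin down the topology of $X$ and then to write down an explicit deformation. Since the action preserves the hypersymplectic structure it preserves each $\omega_i$ (a connected group fixes the linearly independent classes $[\omega_i]$, so it acts trivially on the triple), whence $\mathcal{L}_V\omega_i=0$ and the $1$-forms $\alpha_i:=\iota_V\omega_i$ are closed. They are also basic: $\iota_V\alpha_i=\omega_i(V,V)=0$ and $\mathcal{L}_V\alpha_i=0$. The key pointwise fact I would record is that, wherever $V\neq 0$, the $\alpha_i$ are linearly independent: if $c^i\alpha_i=0$ with $c\neq 0$ then $V$ lies in the kernel of the symplectic form $c^i\omega_i$, which is impossible. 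Finally, the classes $[\omega_i]$ span a positive $3$-dimensional subspace of $H^2(X;\R)$ (their intersection matrix is $2\delta_{ij}$), so $b^+(X)\geq 3$.

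Next I would show the action is free. It cannot be Hamiltonian with respect to any $\omega_i$: a closed symplectic $4$-manifold carrying a Hamiltonian circle action is rational or ruled (Karshon), hence has $b^+=1$, contradicting $b^+\geq 3$. By McDuff's theorem a symplectic circle action on a closed $4$-manifold with a fixed point is Hamiltonian; as ours is not Hamiltonian, it is fixed-point free and $V$ vanishes nowhere. A point with finite stabiliser $\Z_k$ is then excluded by the slice theorem: $\Z_k$ acts on the $3$-dimensional normal slice by isometries of the induced invariant metric fixing the point, while it fixes each covector $\alpha_i$ (the $\alpha_i$ annihilate $V$ and are independent, hence span the slice's cotangent space); thus its $1$-jet at the point is trivial and $\Z_k$ is trivial. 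Hence the $S^1$-action is free. This freeness step, resting on the interplay between the symplectic topology of the action and the pointwise behaviour of the $\alpha_i$, is the part I expect to be most delicate.

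With a free action, $\pi\colon X\to Y:=X/S^1$ is a principal circle bundle over a closed $3$-manifold. The basic forms $\alpha_i$ descend to closed, pointwise-independent $1$-forms $\bar\alpha_i$ on $Y$; their dual frame commutes because $d\bar\alpha_i=0$, integrates to a free transitive $\R^3$-action, and identifies $Y\cong\mathbb{T}^3$ with $\bar\alpha_i=dx^i$ in linear coordinates. Choosing a connection $\theta$ with $\theta(V)=1$ and writing $\omega_i=\theta\wedge\alpha_i+\gamma_i$ with $\gamma_i$ basic, closedness yields $F\wedge dx^i+d\bar\gamma_i=0$ for the curvature $F=\pi^*F_0$; pairing with cohomology in $H^3(\mathbb{T}^3)$ forces $[F_0]=0$, so the bundle is trivial and $X\cong\mathbb{T}^4=\mathbb{T}^3\times S^1$. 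In the resulting coordinates $(x^1,x^2,x^3,\tau)$ with $V=\partial_\tau$ one has $\omega_i=d\tau\wedge dx^i+\gamma_i$, where each $\gamma_i=G_{ik}\,\widehat{dx^k}$ (with $\widehat{dx^1}=dx^2\wedge dx^3$, etc.) is a closed $2$-form on $\mathbb{T}^3$; closedness is exactly divergence-freeness of the rows of $G=(G_{ik})$, and a direct computation gives $\omega_i\wedge\omega_j=(G+G^{T})_{ij}\,d\tau\wedge dx^1\wedge dx^2\wedge dx^3$, so the hypersymplectic condition becomes positive-definiteness of $G+G^{T}$ at every point.

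Finally I would build the isotopy by linearly interpolating toward the harmonic, constant-coefficient representative. Let $\bar G$ be the average of $G$ over $\mathbb{T}^3$ and set $\omega_i^{t}=(1-t)\,\omega_i+t\,\omega_i^{\infty}$, where $\omega_i^{\infty}=d\tau\wedge dx^i+\bar G_{ik}\,\widehat{dx^k}$ is cohomologous to $\omega_i$. Then $\omega_i^{t}$ has matrix $(1-t)G+t\bar G$, whose rows remain divergence-free and whose symmetrisation $(1-t)(G+G^{T})+t(\bar G+\bar G^{T})$ is a convex combination of pointwise-positive matrices, hence positive definite; thus $\underline\omega^{t}$ is hypersymplectic for all $t\in[0,1]$ and stays in the fixed cohomology class. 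At $t=1$ the forms have constant coefficients, and since $\int\omega_i^{\infty}\wedge\omega_j^{\infty}=\int\omega_i\wedge\omega_j=2\delta_{ij}$ while $\omega_i^{\infty}\wedge\omega_j^{\infty}$ is a constant multiple of the volume form, that multiple is $2\delta_{ij}/\mathrm{Vol}$, so the matrix $\bar G+\bar G^{T}$ is a positive multiple of the identity. This is exactly the condition for the constant triple $\underline\omega^{\infty}$ to be the Kähler triple of a flat hyperkähler metric, completing the linear isotopy from $\underline\omega$ to a hyperkähler triple.
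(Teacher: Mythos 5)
Your proposal is correct, and although its overall skeleton (contract with $V$ to get three closed basic $1$-forms, identify $Y=X/S^1$ with $\mathbb{T}^3$, trivialize the circle bundle, then linearly interpolate to the constant-coefficient representative of the same period) runs parallel to the paper, the two load-bearing steps are done by genuinely different means. For freeness, the paper shows each zero of $\bm v$ is isolated of index $1$ via an $\SU(2)$-representation argument, then combines Li's homological classification of symplectic Calabi--Yau surfaces with the Atiyah--Hirzebruch vanishing theorem to force $\chi(X)=0$, so Poincar\'e--Hopf excludes zeros; you instead invoke McDuff (a symplectic circle action on a closed $4$-manifold with a fixed point is Hamiltonian) and Karshon (Hamiltonian implies rational or ruled, hence $b^+=1$), contradicting $b^+\geq 3$. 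Both routes lean on deep external results, but yours has a concrete advantage: you explicitly kill finite stabilizers with the slice/isometry-rigidity argument, whereas the paper deduces ``free'' from ``$\bm v$ has no zeros'', which by itself only gives \emph{local} freeness---the isotropy argument you supply (or an adaptation of the paper's own $\SU(2)$ reasoning to non-fixed points) is genuinely needed there. For triviality of $X\to\mathbb{T}^3$, the paper again uses Li's theorem (to get $b_1(X)=4$) plus the Gysin sequence, while you pair the curvature identity $F_0\wedge\bar\alpha_i=-d\bar\gamma_i$ against the basis $\left[\bar\alpha_i\right]$ of $H^1\left(\mathbb{T}^3;\R\right)$ to get $[F_0]=0$ directly (and $H^2\left(\mathbb{T}^3;\Z\right)$ is torsion-free, so the integral Chern class vanishes); this is more elementary and removes the dependence on Li's result from the free case entirely. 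Your interpolation is essentially the paper's construction---your fiberwise average $\bar G$ plays the role of the paper's matrix $\mathbf{B}$ determined by periods---with one clean improvement: using the normalization $\int\omega_i\wedge\omega_j=2\delta_{ij}$ you show $\bar G+\bar G^{T}$ is a positive multiple of the identity, so your endpoint is \emph{literally} a hyperk\"ahler triple, whereas the paper's endpoint $\underline\omega^{\mathbf B}$ becomes one only after a constant $\SL(3,\R)$ change of frame. One terminological slip, not fatal: the $\R^3$-action you integrate on $Y$ is transitive with cocompact lattice stabilizer $\Gamma$, not ``free''; the correct conclusion is $Y\cong\R^3/\Gamma\cong\mathbb{T}^3$ with the $\bar\alpha_i$ becoming constant-coefficient forms.
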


Our proof does not rely on the sophisticated machinery of the hypersymplectic flow. 
One difficult point in the proof of Donaldson's conjecture is that $X$ must be shown to be diffeomorphic to either $\mathbb{T}^4$ or a $K3$ surface. Assume for the moment that the $S^1$-action preserving $\underline{\omega}$ is free. Our first observation is the invariant hypersymplectic structure on $X$ induces a flat Riemannian metric on $X/S^1$, which is subsequently identified as a flat torus $\mathbb{T}^3$. A topological argument then implies that $X \to \mathbb{T}^3$ is a trivial $S^1$-bundle and so $X$ is diffeomorphic to $\mathbb{T}^4$. 

Next, by exploiting the $S^1$-symmetry and using the idea of a multi-moment map and the generalized Gibbons-Hawking construction for hypersymplectic structure introduced by Donaldson in \cite{Don}, we deform the hypersymplectic structure on $\mathbb{T}^4$ to a hyperk\"ahler structure via \emph{linear interpolation}. Finally, the assumption of \emph{freeness} of the action is weakened to \emph{effectiveness} by combining the known homological information for symplectic Calabi-Yau surfaces \cite{Li} and the observation that any possible zero of the generating vector field (of the effective circle action) must be isolated.

Despite the fact we have confirmed Conjecture~\ref{Donaldson-conjecture} in this symmetric case via a direct approach, we still believe the hypersymplectic flow is the most suitable method to attack the conjecture in general. 

We take the chance to clarify that the terminology ``hypersymplectic structure'' appearing in \cite{Don, FHY, FY, FY2, HWY} and the current article has a different meaning with the perhaps more well-known notion of ``hypersymplectic structure'' introduced by Hitchin \cite{Hitchin} or its equivalent ``neutral hyperk\"ahler structure'' \cite{Ka}. They certainly share some common feature, while there are two main differences: firstly, the signature (of the related Riemannian metric) is $(4,0)$ in our setting while Hitchin's hypersymplectic structure is of signature $(2,2)$; secondly, the triple of symplectic structures in our setting is only required to satisfy the positivity 
\begin{align*}
\left(\frac{\omega_i\wedge\omega_j}{2\mu}\right)>0
\end{align*}
 while for Hitchin's hypersymplectic structure the triple of symplectic structures $(\omega_1, \omega_2, \omega_3)$ (see \cite{DS}) satisfies the identity
 \begin{align*}
 \left(\frac{\omega_i\wedge\omega_j}{2\mu}\right)
 = 
 \text{diag}\left(-1,1,1\right) 
 \end{align*}
for some volume form $\mu$. 

\subsection*{Funding}
This work was supported by the “Excellence of Science” [40007524 to J.F.]; the Fund for Scientific Research [PDR T.0082.21 to J.F.]; and the National Natural Science Foundation of China [12401071 to C.Y.].

\subsection*{Acknowledgments}
The third author would like to thank Andries Salm for useful discussions. We are grateful to the referee for bringing Hitchin's article on hypersymplectic structures to our attention.

\section{Hypersymplectic Structure Invariant Under a Free $S^1$-Action}
%Let $\underline\omega=\left(\omega_1, \omega_2, \omega_3\right)$ be a hypersymplectic structure on a smooth $4$-manifold $X^4$, and $\bm v$ be a complete smooth vector field on $X$, i.e. a smooth vector field generating a global flow $\left\{\Phi_t\right\}_{t\in \mathbf{R}}$ on $X$. We assume the three symplectic forms $\omega_1, \omega_2, \omega_3$ are invariant under the flow $\Phi_t$. 

Lert $X$ be a compact smooth $4$-manifold with a hypersymplectic structure $\underline{\omega}=(\omega_1, \omega_2,\omega_3)$. In this section we assume that $\underline{\omega}$ is invariant under a \emph{free} $S^1$-action, generated by the nowhere-vanishing vector field $\bm v$ on $X$.

For any $\left(a^1,a^2,a^3\right)\in \mathbf{R}^3\backslash\{0\}$, the $2$-form $a^i
\omega_i$ is non-degenerate at any point of $X$ (by the definition of hypersymplectic structure). This implies that the 1-form $a^i\iota_{\bm v}\omega_i$ is nonzero at any point of $X$. The condition $\mathcal{L}_{\bm v}\omega_i = \diff \iota_{\bm v}\omega_i =0$ now implies that the three $1$-forms $\alpha^i:= \iota_{\bm v}\omega_i$ (with $i=1,2,3$) are closed on $X$. Following Donaldson \cite{Don}, we use this triple of closed 1-forms to describe the triple $\underline\omega$ in a canonical way.

\begin{lemma}    \label{representation:canonical}
    There exists a unique tuple $\left(\alpha,\sigma\right)=\left( \alpha,\left(\sigma_{ij}\right)_{3\times 3}\right)\in \Gamma\left(T^*X\right)\times \Gamma\left(\underline\Sym^2\mathbf{R}^3\right)$ satisfying $\alpha\left(\bm v\right)\equiv 1$ on $X$ such that 
    \begin{align}
        \omega_i
        =
        \alpha\wedge \alpha^i + \sum_{(pqr)=(123)} \sigma_{ip}\alpha^q\wedge \alpha^r, \; \; i=1,2,3,
    \end{align}  
    where $\underline\Sym^2\mathbf{R}^3$ denotes the trivial bundle of symmetric $3\times 3$ matrices on $X$.
\end{lemma}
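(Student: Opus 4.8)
The plan is to exploit the fact that a free $S^1$-action makes $\bm v$ nowhere-vanishing, so that $\{\alpha^1,\alpha^2,\alpha^3\}$ together with any $1$-form $\alpha$ normalized by $\alpha(\bm v)=1$ forms a global coframe; then I expand each $\omega_i$ in the resulting basis of $2$-forms, pin down the ``$\alpha\wedge\alpha^i$'' coefficients by contracting with $\bm v$, and finally use the \emph{symmetry} requirement on $\sigma$ to eliminate the remaining freedom in $\alpha$. First I would verify that $\alpha^1,\alpha^2,\alpha^3$ are pointwise linearly independent: for $(a^1,a^2,a^3)\neq 0$ one has $a^i\alpha^i=\iota_{\bm v}(a^i\omega_i)$, which is nonzero since $a^i\omega_i$ is non-degenerate and $\bm v\neq 0$, so no nontrivial combination of the $\alpha^i$ vanishes. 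Since $\iota_{\bm v}\alpha^i=\omega_i(\bm v,\bm v)=0$, the $\alpha^i$ lie in the $3$-dimensional annihilator of $\bm v$ in $T_x^*X$ and hence span it. Picking any smooth $1$-form $\alpha_0$ with $\alpha_0(\bm v)\equiv 1$ (for instance $\alpha_0=g(\bm v,\cdot)/g(\bm v,\bm v)$ for a background metric $g$), the quadruple $\{\alpha_0,\alpha^1,\alpha^2,\alpha^3\}$ is a global coframe, and the six $2$-forms $\alpha_0\wedge\alpha^j$ and $\alpha^q\wedge\alpha^r$ give a pointwise basis of $\Lambda^2T^*X$.

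Next I expand $\omega_i=\sum_j c_{ij}\,\alpha_0\wedge\alpha^j+\sum_{(pqr)=(123)}\sigma^{(0)}_{ip}\,\alpha^q\wedge\alpha^r$ with smooth coefficient functions. Contracting with $\bm v$ and using $\iota_{\bm v}(\alpha_0\wedge\alpha^j)=\alpha^j$ and $\iota_{\bm v}(\alpha^q\wedge\alpha^r)=0$ yields $\alpha^i=\iota_{\bm v}\omega_i=c_{ij}\alpha^j$, so $c_{ij}=\delta_{ij}$ by linear independence. Thus \emph{every} admissible normalization $\alpha_0$ already produces a decomposition of exactly the stated shape, but with a matrix $\sigma^{(0)}$ that is in general not symmetric.

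The crux is to fix $\alpha$ using the symmetry of $\sigma$. The most general $1$-form with $\alpha(\bm v)=1$ is $\alpha=\alpha_0+b_k\alpha^k$, since the difference is horizontal and hence a combination of the $\alpha^k$. This changes the first sum by $b_k\,\alpha^k\wedge\alpha^i$, which is purely a combination of the $\alpha^q\wedge\alpha^r$; writing $\alpha^k\wedge\alpha^i$ in the cyclic basis one finds that $\sigma^{(0)}$ is replaced by $\sigma^{(0)}-B(b)$, where $B(b)_{ip}=\varepsilon_{kip}b_k$ is antisymmetric in $(i,p)$. Since $b\mapsto B(b)$ is the standard linear isomorphism from $\mathbf{R}^3$ onto the space of antisymmetric $3\times 3$ matrices, at each point there is a unique $b$ killing the antisymmetric part of $\sigma^{(0)}$, and it depends smoothly on the point. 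The resulting $\alpha$ makes $\sigma$ symmetric (existence), and because the gauge freedom in $\alpha$ is parametrized precisely by antisymmetric alterations of $\sigma$, demanding $\sigma\in\underline\Sym^2\mathbf{R}^3$ leaves no residual freedom, forcing uniqueness of both $\alpha$ and $\sigma$.

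I expect the only substantive point to be the identification in the third paragraph of the gauge action $\alpha_0\mapsto\alpha_0+b_k\alpha^k$ with the space of antisymmetric matrices acting on $\sigma$; everything else is the routine coframe expansion and a contraction with $\bm v$. As a byproduct, since $\omega_i,\alpha^i,\bm v$ are all $S^1$-invariant, uniqueness automatically makes $(\alpha,\sigma)$ invariant as well, which is convenient for the Gibbons–Hawking description that follows.
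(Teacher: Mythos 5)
Your proof is correct and takes essentially the same route as the paper's: expand each $\omega_i$ in the coframe $\{\alpha_0,\alpha^1,\alpha^2,\alpha^3\}$, pin down the $\alpha_0\wedge\alpha^j$-coefficients by contracting with $\bm v$, and absorb the antisymmetric part of the remaining coefficient matrix into the shift $\alpha_0\mapsto\alpha_0+b_k\alpha^k$, with uniqueness following because that shift parametrizes exactly the antisymmetric alterations of $\sigma$ (the paper's $C_k=\frac{1}{2}\epsilon_{ijk}A_{ij}$ is your $b$). The only real difference is that by working with a global smooth reference $\alpha_0$ you obtain smoothness of $(\alpha,\sigma)$ as part of the construction, whereas the paper argues pointwise and establishes smoothness in a separate subsequent lemma via the connection form of $g_{\underline\omega}$ --- a small but genuine tidying-up.
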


\begin{proof}Firstly, notice that $\alpha^i(\bm v)=\omega_i(\bm v,\bm v)=0$ for $i=1,2,3$. Fix any $x\in X$, take any $\alpha_x\in T^*_x X$ such that $\left\{\alpha_x, \alpha^1_x, \alpha^2_x,\alpha^3_x\right\}$ is a basis of $T^*_xX$ and $\alpha_x(\bm v_x)=1$. The set 
\begin{align*}
\left\{\alpha_x\wedge\alpha^1_x, \alpha_x\wedge\alpha^2_x, \alpha_x\wedge\alpha^3_x, \alpha^2_x\wedge\alpha^3_x, \alpha^3_x\wedge\alpha^1_x, \alpha^1_x\wedge\alpha^2_x\right\}
\end{align*}
is then a basis of $\Lambda^2T^*_xX$. There exists a $3\times 3$ matrix $A=\left(A_{ij}\right)$ such that
\begin{align*}
\omega_i|_x = \alpha_x\wedge \alpha^i_x + \sum_{(pqr)=(123)} A_{ip}\alpha^q_x\wedge \alpha^r_x.
\end{align*}
Let $\sigma_{ip}=\frac{1}{2}\left( A_{ip}+A_{pi}\right)$ and $C_k=\frac{1}{2}\epsilon_{ijk}A_{ij}$, then $A_{ip}=\sigma_{ip} + \epsilon_{ipq}C_q$ and as a consequence,
\begin{align*}
\omega_i|_x = \left(\alpha_x + C_k \alpha^k_x\right)\wedge \alpha^i_x + \sum_{(pqr)=(123)} \sigma_{ip}\alpha^q_x\wedge \alpha^r_x
\end{align*}
with $\sigma_{ij}=\sigma_{ji}$ and $\left(\alpha_x + C_k \alpha^k_x\right)(\bm v_x)=1$. This means we can write $\omega_i$'s in the  form claimed in the lemma. 

Next, we show such decomposition is unique. Suppose $\widetilde\alpha_x\in T^*_x X$ and $\widetilde{\sigma}=\left(\widetilde\sigma_{ij}\right)$ satisfying $\widetilde\sigma_{ij}=\widetilde\sigma_{ji}$ and $\widetilde\alpha_x\left(\bm v_x\right)=1$ are another choice of data for which 
\begin{align*}
\omega_i|_x = \widetilde\alpha_x\wedge \alpha^i_x + \sum_{(pqr)=(123)} \widetilde\sigma_{ip}\alpha^q_x\wedge\alpha^r_x, \;\; i=1,2,3.
\end{align*}
Let $\widetilde\alpha_x=\alpha_x + \widetilde{C}_q\alpha^q_x$. Since $\left\{\alpha_x\wedge\alpha^1_x, \alpha_x\wedge\alpha^2_x, \alpha_x\wedge\alpha^3_x, \alpha^2_x\wedge\alpha^3_x, \alpha^3_x\wedge\alpha^1_x, \alpha^1_x\wedge\alpha^2_x\right\}$ is a basis of $\Lambda^2T^*_xX$, we have $\widetilde\sigma_{ij}+\epsilon_{ijk}\widetilde{C}_k=\sigma_{ij}$ which implies $\widetilde C_k=0$ and consequently $\widetilde\alpha=\alpha, \widetilde\sigma=\sigma$. 
\end{proof}

We will use the notation $\alpha^{ij} = \alpha^i \wedge \alpha^j$ and $\alpha^{123} = \alpha^1 \wedge \alpha^2 \wedge \alpha^3$. We recall from~\cite{FY} that a hypersymplectic structure $\underline{\omega}$ determines a Riemannian metric $g_{\underline{\omega}}$ on $X$, via the formula
\begin{equation}
        g_{\underline\omega}(\bm u, \bm w)\mu_{\underline\omega}
        = 
        \frac{1}{6}\epsilon^{ijk}\iota_{\bm u}\omega_i\wedge\iota_{\bm w}\omega_j\wedge\omega_k.
	\label{g-formula}
\end{equation}
We write $\mu_{\underline{\omega}}$ for the volume form of this metric. We also use the notation from~\cite{FY} that $Q_{ij} = \frac{1}{2} g_{\underline{\omega}}(\omega_i,\omega_j)$. The metric is chosen so that the $\omega_i$'s are all self-dual, which fixes the conformal structure. The fact that $\underline{\omega}$ is hypersymplectic implies that $Q$ is positive-definite, and the volume form is chosen so that $\det Q = 1$.

\begin{remark}
Even though the hypersymplectic structure $\underline\omega$ naturally orients $X^4$ and the $S^1$-action preserves this orientation, it is still possible that $\alpha, \alpha^1, \alpha^2,\alpha^3$ is a \emph{negatively} oriented coframe.  In this case, the hypersymplectic structure $\omega_2, \omega_1, \omega_3$, formed by swapping the first two symplectic forms, will be a new hypersymplectic structure (defining the same orientation on $X^4$ as the original one) that is invariant under the given $S^1$-action, for which $\alpha, \alpha^1,\alpha^2,\alpha^3$ is \emph{positively} oriented. We assume from now on that $\underline{\omega}$ has the property that $\alpha, \alpha^1, \alpha^2, \alpha^3$ is positively oriented.
\end{remark}

With this  in hand, we now relate $g_{\underline{\omega}}$, $\mu_{\underline{\omega}}$, and $Q$ to the data $(\alpha, \sigma)$ from Lemma~\ref{representation:canonical}. 

\begin{lemma}
\label{lemma:metric-tensor}
    Write $V=\left(\det\sigma\right)^\frac{1}{3}$. Then 
\begin{equation} 
\begin{split}
    &\mu_{\underline\omega} = V\alpha\wedge \alpha^{123},\;\; Q_{ij}=V^{-1}\sigma_{ij},\\
&g_{\underline\omega}
= 
V^{-1} \alpha^2 + V Q^{-1}_{ij}\alpha^i\otimes \alpha^j,
\end{split}
\end{equation}
where $Q_{ij}^{-1}$ is the $(ij)$-th entry of the matrix $Q^{-1}$. In particular, $V>0$ and $\left(\sigma_{ij}\right)>0$ on $X$.
\end{lemma}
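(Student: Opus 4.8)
The plan is to work throughout in the global coframe $\{\alpha,\alpha^1,\alpha^2,\alpha^3\}$ produced by Lemma~\ref{representation:canonical}. These forms are pointwise linearly independent: if $a^i\alpha^i=0$ with $(a^i)\neq 0$ then $\iota_{\bm v}(a^i\omega_i)=0$, contradicting non-degeneracy of the symplectic form $a^i\omega_i$ (as $\bm v\neq 0$), while $\alpha(\bm v)=1$ and $\alpha^i(\bm v)=0$ make $\alpha$ independent of the $\alpha^i$. Let $\{\bm v,\bm e_1,\bm e_2,\bm e_3\}$ be the dual frame, characterized by $\alpha(\bm e_a)=0$, $\alpha^i(\bm e_a)=\delta^i_a$. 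A short interior-product computation from the canonical form gives the two formulas I will use repeatedly, $\iota_{\bm v}\omega_i=\alpha^i$ and $\iota_{\bm e_a}\omega_i=-\delta_{ia}\alpha-\epsilon_{abc}\sigma_{ib}\alpha^c$. Since the coframe is positively oriented (by the Remark), $\alpha\wedge\alpha^{123}$ is a positive multiple of $\mu_{\underline\omega}$, so I may write $\mu_{\underline\omega}=W\,\alpha\wedge\alpha^{123}$ for a positive function $W$ to be determined.

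I would settle the volume form, the matrix $Q$, and the positivity statements first, before computing the metric. Expanding $\omega_i\wedge\omega_j$ in the canonical form, using $\alpha\wedge\alpha^i\wedge\alpha^q\wedge\alpha^r=\epsilon_{iqr}\,\alpha\wedge\alpha^{123}$ together with the cyclic identity $\sum_{(pqr)=(123)}\sigma_{jp}\epsilon_{iqr}=\sigma_{ij}$, the surviving terms combine to the clean identity $\tfrac12\omega_i\wedge\omega_j=\sigma_{ij}\,\alpha\wedge\alpha^{123}$. On the other hand, self-duality of the $\omega_i$ gives $\omega_i\wedge\omega_j=g_{\underline\omega}(\omega_i,\omega_j)\,\mu_{\underline\omega}=2Q_{ij}\,\mu_{\underline\omega}$, hence $\tfrac12\omega_i\wedge\omega_j=Q_{ij}\,\mu_{\underline\omega}$. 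Comparing and substituting $\mu_{\underline\omega}=W\,\alpha\wedge\alpha^{123}$ yields $Q_{ij}=W^{-1}\sigma_{ij}$; then the normalization $\det Q=1$ forces $W^3=\det\sigma$, so $W=(\det\sigma)^{1/3}=V$, and in particular $\det\sigma=W^3>0$ and $V>0$. Since $Q>0$ and $V>0$, the relation $\sigma=VQ$ gives $(\sigma_{ij})>0$, which finishes the ``in particular'' clause while simultaneously establishing $\mu_{\underline\omega}=V\,\alpha\wedge\alpha^{123}$ and $Q_{ij}=V^{-1}\sigma_{ij}$.

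It then remains to compute $g_{\underline\omega}$ from~\eqref{g-formula}. Writing $\tilde g_{\bm u\bm w}$ for the right-hand side (a $4$-form), I evaluate it on the frame $\{\bm v,\bm e_1,\bm e_2,\bm e_3\}$ using the two interior-product formulas and extracting the coefficient of $\alpha\wedge\alpha^{123}$, dividing by $\mu_{\underline\omega}=V\,\alpha\wedge\alpha^{123}$ at the end. The diagonal $\bm v$-term gives $\tilde g_{\bm v\bm v}=\alpha\wedge\alpha^{123}$ from $\tfrac16\epsilon^{ijk}\epsilon_{ijk}=1$, hence $g_{\underline\omega}(\bm v,\bm v)=V^{-1}$; and the cross terms vanish, $\tilde g_{\bm v\bm e_a}=0$, because every surviving contribution is a contraction of the antisymmetric $\epsilon$ against the symmetric $\sigma$. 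Because $\{\alpha,\alpha^1,\alpha^2,\alpha^3\}$ is exactly the coframe dual to $\{\bm v,\bm e_a\}$, these two facts already produce the $V^{-1}\alpha^2$ term with no $\alpha\otimes\alpha^i$ mixing.

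The one genuinely involved calculation — and the main obstacle — is the ``horizontal'' block $\tilde g_{\bm e_a\bm e_b}$. Expanding $\tfrac16\epsilon^{ijk}\,\iota_{\bm e_a}\omega_i\wedge\iota_{\bm e_b}\omega_j\wedge\omega_k$, the pure-$\alpha$ term drops (it contains $\alpha\wedge\alpha$), and the rest splits into three groups according to how many $\alpha$-factors arise from the first two interior products. Repeatedly applying the contraction identity $\sum_i\epsilon_{ipq}\epsilon_{irs}=\delta_{pr}\delta_{qs}-\delta_{ps}\delta_{qr}$ and the symmetry of $\sigma$, each group collapses to $\tfrac13(\det\sigma)(\sigma^{-1})_{ab}$ by way of the adjugate identity $(\det\sigma)(\sigma^{-1})_{ab}=\tfrac12\epsilon_{apq}\epsilon_{brs}\sigma_{pr}\sigma_{qs}$; summing the three gives $\tilde g_{\bm e_a\bm e_b}=(\det\sigma)(\sigma^{-1})_{ab}\,\alpha\wedge\alpha^{123}$. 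Dividing by $\mu_{\underline\omega}=V\,\alpha\wedge\alpha^{123}$ and rewriting $(\det\sigma)(\sigma^{-1})_{ab}/V=V^2(\sigma^{-1})_{ab}=VQ^{-1}_{ab}$ (using $Q^{-1}=V\sigma^{-1}$, from $Q=V^{-1}\sigma$) yields $g_{\underline\omega}(\bm e_a,\bm e_b)=VQ^{-1}_{ab}$. Assembling the three blocks in the dual coframe produces $g_{\underline\omega}=V^{-1}\alpha^2+VQ^{-1}_{ij}\alpha^i\otimes\alpha^j$, as claimed. The sign bookkeeping in the wedge reorderings and the recognition of the adjugate are where the care lies; everything else is forced by the two structural facts $\tfrac12\omega_i\wedge\omega_j=\sigma_{ij}\,\alpha\wedge\alpha^{123}$ and self-duality.
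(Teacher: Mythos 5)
Your proof is correct and follows essentially the same route as the paper: you derive $\omega_i\wedge\omega_j = 2\sigma_{ij}\,\alpha\wedge\alpha^{123}$ from the canonical decomposition, combine it with self-duality, $\det Q=1$ and the positive orientation to get $\mu_{\underline\omega}$, $Q$ and the positivity statements, and then evaluate formula~\eqref{g-formula} on the frame dual to $\{\alpha,\alpha^1,\alpha^2,\alpha^3\}$ to obtain the metric. The paper's proof simply compresses these same steps into ``direct computations,'' so your write-up amounts to supplying the details the authors omitted.
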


\begin{proof}
These are direct computations. By Lemma~\ref{representation:canonical},
   \begin{align}
        \omega_i\wedge\omega_j 
        = 
    2\sigma_{ij}\alpha\wedge\alpha^{123},
    \end{align}
    which, since $\underline{\omega}$ is hypersymplectic and $\alpha, \alpha^1,\alpha^2,\alpha^3$ is positively oriented, shows that $(\sigma_{ij}) >0$. We now have that $Q_{ij}=V^{-1}\sigma_{ij}$ with $V=\left(\det\sigma\right)^\frac{1}{3}$ (so that $\det Q = 1$). It also follows that $\mu_{\underline\omega} = V\alpha\wedge \alpha^{123}$. 
    
    The $1$-forms $\alpha,\alpha^1,\alpha^2,\alpha^3$ give a basis of the cotangent space of $X$ at any point. We let $\bm v, \bm v_1,\bm v_2,\bm v_3$ be its dual basis on the tangent space. We can directly compute the inner products between these vectors by the formula~\eqref{g-formula} for $g_{\underline{\omega}}$. This gives the claimed expression for $g_{\underline{\omega}}$.
\end{proof}

At this stage we haven't checked that the data $\alpha, \sigma$ is actually smooth. We will do this in the next Lemma. First, we introduce some notation. Write $Y = X/S^1$. Since the $S^1$-action is free, $Y$ is a compact 3-manifold and the quotient map $X \to Y$ is a principal $S^1$-bundle. We write $\mathcal{V} \leq TX$ for the vertical tangent bundle. We use the metric $g_{\underline{\omega}}$ to take the horizontal complement $H$ of $\mathcal{V}$. Since $H$ is $S^1$-invariant (because the metric itself is) it defines an $S^1$-connection in $X \to Y$. 

\begin{lemma}
    The $1$-forms $\alpha, \alpha^1,\alpha^2,\alpha^3$ and the $3\times 3$ matrix-valued function $\left(\sigma_{ij}\right)$ uniquely determined by $\underline\omega$ and $\bm v$ are smooth.  
\end{lemma}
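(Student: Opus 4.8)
The plan is to show smoothness of each piece of data by exhibiting it as the solution of a pointwise-invertible \emph{linear algebraic} system whose coefficients are manifestly smooth, so that smoothness follows from the smooth dependence of solutions of a nondegenerate linear system on its coefficients. The key observation is that every object in sight is built from the symplectic forms $\omega_i$ (which are smooth by hypothesis) and the vector field $\bm v$ (which is smooth as the generator of a smooth $S^1$-action), so the only thing to verify is that the passage from this raw data to the canonical data $(\alpha,\sigma)$ involves no loss of regularity.

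First I would treat the three $1$-forms $\alpha^i = \iota_{\bm v}\omega_i$: these are smooth immediately, being contractions of smooth forms with a smooth vector field. Next I would produce a smooth reference coframe. Since $\bm v$ is nowhere vanishing, I can choose (locally, then patch with a smooth partition of unity, or work on the principal bundle $X \to Y$) a smooth $1$-form $\beta$ with $\beta(\bm v) \equiv 1$; the connection $1$-form of the metric connection on $X \to Y$ described just before the lemma is in fact a canonical global such choice, and is smooth because $g_{\underline\omega}$ is smooth. Then $\beta,\alpha^1,\alpha^2,\alpha^3$ is a smooth coframe (its pointwise linear independence is exactly the nondegeneracy established in the construction preceding Lemma~\ref{representation:canonical}). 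Expanding the smooth $2$-forms $\omega_i$ in the smooth basis $\{\beta\wedge\alpha^j,\ \alpha^q\wedge\alpha^r\}$ of $\Lambda^2 T^*X$ yields coefficient functions that are smooth, since writing a smooth section in a smooth frame produces smooth coordinates.

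The remaining step is to recover $(\alpha,\sigma)$ from these smooth coefficients. Following the proof of Lemma~\ref{representation:canonical} verbatim, the decomposition of the matrix $A = (A_{ij})$ into its symmetric part $\sigma_{ip} = \tfrac12(A_{ip}+A_{pi})$ and the vector $C_k = \tfrac12 \epsilon_{ijk}A_{ij}$ is a \emph{linear} operation on the entries of $A$, hence $\sigma$ and $C$ are smooth; and $\alpha = \beta + C_k \alpha^k$ is then a smooth $1$-form. Since Lemma~\ref{representation:canonical} guarantees this $(\alpha,\sigma)$ is the \emph{unique} tuple with $\alpha(\bm v)\equiv 1$ realizing the decomposition, it coincides pointwise with the canonically determined data, which is therefore smooth.

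I do not expect a serious obstacle here; the content is purely bookkeeping, namely checking that uniqueness lets one identify the canonical data with an explicitly smooth construction. The only mild subtlety worth stating carefully is the choice of the auxiliary smooth form $\beta$ with $\beta(\bm v)\equiv 1$: one must confirm such a $\beta$ exists globally and smoothly, for which the cleanest route is to invoke the $S^1$-invariant splitting $TX = \mathcal V \oplus H$ from $g_{\underline\omega}$ and let $\beta$ be the corresponding (smooth, $S^1$-invariant) $1$-form dual to the unit generator along $\mathcal V$ and annihilating $H$. Once $\beta$ is in hand, every subsequent manipulation is smooth by inspection.
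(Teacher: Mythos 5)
Your proof is correct, and it is close in spirit to the paper's but runs on a different logical mechanism. The paper does not introduce an auxiliary reference form at all: it proves that the canonical $\alpha$ \emph{is} the connection $1$-form of the metric splitting, i.e.\ that the vertical projection satisfies $A_{\underline\omega}(\bm w)=\alpha(\bm w)\bm v$ (this uses the formula $g_{\underline\omega}=V^{-1}\alpha^2+VQ^{-1}_{ij}\alpha^i\otimes\alpha^j$ from Lemma~\ref{lemma:metric-tensor} to see that the $g_{\underline\omega}$-horizontal space is exactly $\ker\alpha$), so smoothness of $\alpha$ is immediate from smoothness of $A_{\underline\omega}$; it then gets $V$ from $\mu_{\underline\omega}=V\alpha\wedge\alpha^{123}$ and $\sigma=VQ$ from the smooth tensors $\mu_{\underline\omega}$ and $Q$. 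You instead use the connection form (or any smooth $\beta$ with $\beta(\bm v)\equiv 1$, e.g.\ from a partition of unity) only as a reference coframe element, re-run the pointwise linear-algebra construction of Lemma~\ref{representation:canonical} with smooth coefficients, and conclude by the uniqueness clause. Your route is somewhat more elementary and self-contained --- in the partition-of-unity variant it does not need the metric $g_{\underline\omega}$ at all, only nondegeneracy of the $\alpha^i$ and uniqueness --- whereas the paper's route buys a genuinely useful extra fact: the identification of $\alpha$ as a principal connection form, which is exactly what is invoked later (in Lemma~\ref{alphas-pulled-back}) to argue that $\diff\alpha$ is pulled back from $Y$. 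If you adopted your argument in the paper, that identification would still need to be proved separately.
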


\begin{proof}
    Since $\bm v$ is smooth, the 1-forms $\alpha^i=\iota_{\bm v}\omega_i$ are also smooth for $i=1,2,3$. Write $A_{\underline\omega} \colon TX \to \mathcal{V}$ for the vertical projection map of the connection in $X \to Y$. Then, for $\bm w\in TX$,
    \begin{align}\label{connection-1-form}
        A_{\underline\omega}\left(\bm w\right)=\alpha\left(\bm w\right)\bm v.
    \end{align}
    This can be seen as follows: for $\bm w\in H_{\underline\omega}$, $g_{\underline\omega}\left(\bm w,\bm v\right)=V^{-1}\alpha\left(\bm w\right)=0$ by Lemma \ref{lemma:metric-tensor} and the fact that $\alpha^i\left(\bm v\right)=0$; for $\bm w=\bm v$, $A_{\underline\omega}\left(\bm v\right)=\bm v=\alpha\left(\bm v\right)\bm v$. Now since ${\bm v}$ is smooth and nowhere vanishing, and $A$ is smooth, it follows that $\alpha$ is smooth as well.

    The formula $\mu_{\underline\omega} = V \alpha \wedge \alpha^{123}$ shows that $V$ is smooth and now the formula for $Q$ implies that $\sigma = VQ$ is also smooth. 
\end{proof}

\begin{definition}[The structural data]
The tuple  $\big\{ \alpha, \alpha^i, \left(\sigma_{ij}\right)\big\}$, uniquely associated to $\underline{\omega}$ in Lemma \ref{representation:canonical}, is called the \emph{structural data} of $\underline\omega$.
\end{definition}

\begin{lemma}\label{alphas-pulled-back}
We have $\cL_{\bm v} \alpha = 0 = \cL_{\bm v} \alpha_i$.
\end{lemma}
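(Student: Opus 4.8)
The plan is to treat the two families of one-forms separately, handling the $\alpha^i$ by a direct Cartan-calculus computation and $\alpha$ by invoking the uniqueness clause of Lemma~\ref{representation:canonical}.

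For the forms $\alpha^i = \iota_{\bm v}\omega_i$, I would use that the interior product and Lie derivative along the \emph{same} field commute, since $\cL_{\bm v}\iota_{\bm v} - \iota_{\bm v}\cL_{\bm v} = \iota_{[\bm v,\bm v]} = 0$. Hence $\cL_{\bm v}\alpha^i = \cL_{\bm v}\iota_{\bm v}\omega_i = \iota_{\bm v}\cL_{\bm v}\omega_i = 0$, because $\underline\omega$ is $S^1$-invariant. This part is immediate and needs no further input.

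For $\alpha$, the essential point is uniqueness. Let $\phi_t$ denote the flow of $\bm v$. Since $\cL_{\bm v}\alpha^i = 0$ and $\cL_{\bm v}\omega_i = 0$, integrating gives $\phi_t^*\alpha^i = \alpha^i$ and $\phi_t^*\omega_i = \omega_i$ for all $t$. Pulling the canonical decomposition back by $\phi_t$ and using these invariances yields
\begin{align*}
\omega_i = (\phi_t^*\alpha)\wedge\alpha^i + \sum_{(pqr)=(123)}(\phi_t^*\sigma_{ip})\,\alpha^q\wedge\alpha^r .
\end{align*}
Now $\phi_t^*\sigma$ is again a symmetric matrix-valued function, and since the flow of a vector field commutes with its own generator we have $(\phi_t)_*\bm v = \bm v$, so $(\phi_t^*\alpha)(\bm v) = \alpha((\phi_t)_*\bm v) = \alpha(\bm v) = 1$. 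Thus $(\phi_t^*\alpha, \phi_t^*\sigma)$ is another tuple satisfying the hypotheses of Lemma~\ref{representation:canonical}, and the uniqueness assertion of that lemma forces $\phi_t^*\alpha = \alpha$ (and incidentally $\phi_t^*\sigma = \sigma$). Differentiating at $t=0$ gives $\cL_{\bm v}\alpha = 0$.

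The argument is essentially soft; the only point demanding care is verifying that the pulled-back tuple genuinely meets \emph{both} normalizations required by Lemma~\ref{representation:canonical}, namely symmetry of $\phi_t^*\sigma$ and the condition $(\phi_t^*\alpha)(\bm v) = 1$, so that uniqueness is applicable. Once these are checked the conclusion is automatic, and I do not anticipate a serious obstacle. As an independent sanity check I would note the alternative route: $g_{\underline\omega}$ is $S^1$-invariant, being built naturally from the invariant triple $\underline\omega$ via~\eqref{g-formula}, so the horizontal distribution $H_{\underline\omega}$ and hence the connection form $\alpha$ are invariant, reproving $\cL_{\bm v}\alpha = 0$.
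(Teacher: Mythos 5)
Your proof is correct, but your treatment of $\alpha$ takes a genuinely different route from the paper's. For the $\alpha^i$ the two arguments are essentially the same computation in different packaging: you use the commutator identity $\cL_{\bm v}\iota_{\bm v}-\iota_{\bm v}\cL_{\bm v}=\iota_{[\bm v,\bm v]}=0$, while the paper notes $\iota_{\bm v}\alpha^i=0$ and $\diff\alpha^i=0$ (the latter because $0=\cL_{\bm v}\omega_i=\diff\iota_{\bm v}\omega_i$) and applies Cartan's formula. The real divergence is at $\alpha$: the paper uses the fact, established in the preceding smoothness lemma, that $\alpha$ is the connection $1$-form of the $S^1$-invariant horizontal distribution, so $\diff\alpha$ is pulled back from $Y$ and hence $\iota_{\bm v}\diff\alpha=0$; combined with $\diff(\iota_{\bm v}\alpha)=\diff 1=0$, Cartan's formula gives $\cL_{\bm v}\alpha=0$. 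You instead pull back the canonical decomposition by the flow $\phi_t$, verify that $\left(\phi_t^*\alpha,\phi_t^*\sigma\right)$ meets both normalizations (symmetry of $\phi_t^*\sigma$, and $(\phi_t^*\alpha)(\bm v)=1$ via $(\phi_t)_*\bm v=\bm v$), and invoke the pointwise uniqueness clause of Lemma~\ref{representation:canonical} to conclude $\phi_t^*\alpha=\alpha$, then differentiate at $t=0$. Both arguments are sound, and your verification of the two normalization conditions is exactly the care the uniqueness argument requires. Yours is softer and more self-contained: it needs only the uniqueness statement and the invariance of the $\alpha^i$, not the identification of $\alpha$ as a connection form (equivalently, not the invariance of $g_{\underline\omega}$ and of its horizontal distribution), and it yields $\cL_{\bm v}\sigma=0$ as a byproduct. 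The paper's route is shorter once the connection interpretation is in place and records along the way the useful fact that $\diff\alpha$ is basic, which the subsequent Gibbons--Hawking-type constructions exploit; your closing sanity check is essentially that argument.
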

\begin{proof}
Recall that $\alpha^i = \iota_{\bm v} \omega_i$. So $\iota_{\bm v} \alpha^i=0$. Since $\cL_{\bm v} \omega_i = 0$ we see that $\diff \alpha_i = 0$ and hence $\cL_{\bm v} \alpha^i = 0$. Next note that, since $\alpha$ is a connection 1-form in $X \to Y$, $\diff \alpha$ is pulled back from $Y$. It follows that $\iota_v \diff \alpha =0$. At the same time, $\iota_{\bm v} \alpha = 1$ so $\diff (\iota_{\bm v} \alpha) =0$ and hence $\cL_{\bm v} \alpha =0$.
\end{proof}

\begin{remark}\label{SL3R-action}
    If $P\in \SL(3,\mathbf{R})$ is a constant matrix, then $\widetilde\omega_i=P_{ij}\omega_j$ is a new hypersymplectic structure on $X$ whose structural data is related to that of $\underline{\omega}$ by 
    \begin{align*}
    \widetilde\omega_i 
    = 
    \alpha\wedge \widetilde\alpha^i 
    + 
    \sum_{(pqr)=(123)} \widetilde\sigma_{ip}\widetilde\alpha^q\wedge \widetilde\alpha^r
    \end{align*}
    with 
    \begin{align*}
    \widetilde\alpha^i 
    = 
    P_{ij}\alpha^j, \;\; 
    \widetilde\sigma_{ip}
    = 
    \left( P\sigma P^T \right)_{ip}.
    \end{align*}
\end{remark}

\begin{example}\label{example}
In \cite{FHY}, a special type of hypersymplectic structure was introduced  on $\mathbb{T}^4$, which is invariant under a $\mathbb{T}^3$-action. Using affine coordinates $(x^0,x^1, x^2,x^3) \mapsto (e^{ix_0},e^{ix_1},e^{ix_2},e^{ix_3})$ on $\mathbb{T}^4$, such structures have the shape
\begin{align*}
\omega_i = \diff x^0 \wedge \alpha_{ip}\diff x^p + \frac{1}{2} \epsilon_{ijk}\diff x^{j}\wedge \diff x^k,
\end{align*}
where $\left( \alpha_{ij}\right)$ is a symmetric $3\times 3$ positive definite matrix-valued function of $x^0$. This structure is $\mathbb{T}^3$-invariant where $\mathbb{T}^3$ acts as 
\begin{align*}
    \left(e^{it_1}, e^{it_2}, e^{it_3}\right)\cdot \left( e^{ix_0}, e^{ix_1}, e^{ix_2}, e^{ix_3}\right) = \left( e^{ix_0}, e^{i(t_1+ x_1)}, e^{i(t_2+x_2)}, e^{i(t_3+x_3)}\right). 
\end{align*}
The structure $\underline\omega$ is $S^1$-invariant, for three different $S^1$-actions, acting in the obvious way on the three $S^1$ factors of $\mathbb{T}^4$. It fits into the current framework. For instance, if we take the action generated by $\bm v=\frac{\partial}{\partial x^3}$, then the structural data of $\underline{\omega}$ is
\begin{equation*}
\begin{split}
& \alpha 
 = 
\diff x^3 + \frac{\alpha_{13}}{\alpha_{33}} \diff x^1 + \frac{\alpha_{23}}{\alpha_{33}} \diff x^2,\\
&\alpha^1 
 = 
- \diff x^2 -\alpha_{13}\diff x^0,\\
&\alpha^2 
 = 
\diff x^1- \alpha_{23}\diff x^0,\\
&\alpha^3 
 = 
-\alpha_{33} \diff x^0,\\
&\sigma_{ij}
=
 \frac{\alpha_{ij}}{\alpha_{33}}. 
\end{split}
\end{equation*}
\end{example}

\section{Linear interpolation of hypersymplectic structures}
\subsection{Uniformizing hypersymplectic structures invariant under a free $S^1$-action}

We begin by showing that, when the circle action is free, the 4-manifold $X$ is diffeomorphic to $\mathbb{T}^4$.

\begin{theorem}
\label{thm:standardization}
Let $X^4$ be a compact smooth $4$-manifold admitting a smooth hypersymplectic structure which is invariant under a free smooth $S^1$-action, then $Y^3:=X^4/S^1$ is diffeomorphic to $\mathbb{T}^3$ and $X^4$ is $S^1$-equivariantly diffeomorphic to $\mathbb{T}^4$, where $S^1$ acts on $\mathbb{T}^4$ by rotation on the first circle factor. 
\end{theorem}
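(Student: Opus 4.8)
The plan is to descend the structural data of Lemma~\ref{representation:canonical} to the quotient $Y=X/S^1$, recognise $Y$ as a flat $3$-torus from the resulting closed coframe, and then prove the principal $S^1$-bundle $X\to Y$ is trivial by a cohomological argument fed by the closedness of the $\omega_i$. To begin, the three $1$-forms $\alpha^i=\iota_{\bm v}\omega_i$ are closed (as noted before Lemma~\ref{representation:canonical}), satisfy $\iota_{\bm v}\alpha^i=0$, and are $S^1$-invariant by Lemma~\ref{alphas-pulled-back}; hence they are basic and descend to closed $1$-forms $\bar\alpha^1,\bar\alpha^2,\bar\alpha^3$ on $Y$. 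Since $\{\alpha,\alpha^1,\alpha^2,\alpha^3\}$ is a coframe on $X$ with $\alpha^i(\bm v)=0$, the restrictions of the $\alpha^i$ to the horizontal bundle $H$ are pointwise linearly independent, so $\{\bar\alpha^i\}$ is a global coframe of closed $1$-forms on the compact $3$-manifold $Y$. This closed coframe is the structure I will exploit twice.

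Second, to pin down $Y$, I would equip it with the metric $\bar g=\sum_i\bar\alpha^i\otimes\bar\alpha^i$, for which the closed coframe is orthonormal. For an orthonormal coframe the Levi-Civita connection forms $\theta^i{}_j$ are determined by $\theta^i{}_j=-\theta^j{}_i$ together with $d\bar\alpha^i=-\theta^i{}_j\wedge\bar\alpha^j$; writing $\theta^i{}_j=\Gamma^i_{jk}\bar\alpha^k$ and using $d\bar\alpha^i=0$, the antisymmetry $\Gamma^i_{jk}=-\Gamma^j_{ik}$ (metric compatibility) combined with the symmetry $\Gamma^i_{jk}=\Gamma^i_{kj}$ (vanishing torsion plus $d\bar\alpha^i=0$) forces every $\Gamma^i_{jk}=0$. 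Thus $\bar g$ is flat with globally parallel coframe, hence of trivial holonomy, and $Y$ is complete because it is compact. Integrating the $\bar\alpha^i$ gives a developing map from the universal cover $\tilde Y$ to $\mathbf{R}^3$ which is a local isometry between complete simply connected flat spaces, hence a diffeomorphism; the deck group then acts by translations (trivial holonomy) and is a cocompact lattice, so $Y\cong\mathbb{T}^3$.

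Third, with $Y\cong\mathbb{T}^3$ in hand, $X\to Y$ is a principal $S^1$-bundle whose connection form is $\alpha$ by~\eqref{connection-1-form}; its curvature $d\alpha$ is basic and its class $[\overline{d\alpha}]\in H^2(Y;\mathbf{R})$ is (a multiple of) the real Euler class. To see this class vanishes I would differentiate the canonical expression for $\omega_i$: since $d\alpha^i=0$, the equation $d\omega_i=0$ becomes $d\alpha\wedge\alpha^i=-d\big(\sum_{(pqr)=(123)}\sigma_{ip}\,\alpha^q\wedge\alpha^r\big)$, which on $Y$ exhibits $\overline{d\alpha}\wedge\bar\alpha^i$ as exact. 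Therefore $\int_Y\overline{d\alpha}\wedge\bar\alpha^i=0$ for every $i$; as the classes $[\bar\alpha^i]$ span $H^1(\mathbb{T}^3;\mathbf{R})$ and the cup pairing $H^2\times H^1\to H^3$ is perfect, $[\overline{d\alpha}]=0$. Since $H^2(\mathbb{T}^3;\mathbf{Z})$ is torsion-free the integral Euler class also vanishes, so the bundle is trivial and $X$ is $S^1$-equivariantly diffeomorphic to $\mathbb{T}^3\times S^1=\mathbb{T}^4$, with $S^1$ rotating one factor.

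I expect the triviality of the bundle to be the main obstacle: circle bundles over $\mathbb{T}^3$ with nonzero Euler class do exist (they are nilmanifolds), so some input beyond topology is unavoidable, and the crucial point is precisely that closedness of \emph{all three} forms $\omega_i$ forces the divergence-type exactness of $\overline{d\alpha}\wedge\bar\alpha^i$. The flatness argument is comparatively routine, but one must use that the coframe is globally defined and closed — which is what produces trivial holonomy — since a general compact flat $3$-manifold need not be a torus.
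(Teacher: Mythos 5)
Your proposal is correct, and while the first half follows the paper, the second half takes a genuinely different and more elementary route. The identification $Y \cong \mathbb{T}^3$ is in substance the paper's argument: both hinge on the fact that the closed coframe $\bar\alpha^1,\bar\alpha^2,\bar\alpha^3$ is globally parallel for the metric $\sum_i \bar\alpha^i \otimes \bar\alpha^i$, giving trivial holonomy; the paper then cites Bieberbach/Auslander--Kuranishi (or the classification of compact flat $3$-manifolds), whereas you run the developing-map argument by hand, which is fine. The real divergence is in proving triviality of the bundle $X \to Y$. The paper first establishes $b_1(X) = 4$ by invoking Li's theorem on symplectic Calabi--Yau surfaces ($b^+ \leq 3$, $b_1 \leq 4$, $\tau \in \{-16,-8,0\}$) together with $\chi(X) = 0$, and then applies the Gysin sequence~\eqref{Gysin-sequence}: if $c_1 \neq 0$, then $\pi^*\colon H^1(\mathbb{T}^3) \to H^1(X)$ would be surjective, contradicting $b_1(\mathbb{T}^3) = 3 < 4 = b_1(X)$. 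You instead kill the Euler class directly by Stokes' theorem: $d\omega_i = 0$ and $d\alpha^i = 0$ force
\begin{equation*}
\overline{d\alpha} \wedge \bar\alpha^i
=
- \, d \Bigl( \sum_{(pqr)=(123)} \bar\sigma_{ip} \, \bar\alpha^q \wedge \bar\alpha^r \Bigr)
\quad \text{on } Y,
\end{equation*}
so $[\overline{d\alpha}]$ pairs to zero against the spanning classes $[\bar\alpha^i]$, hence vanishes by Poincar\'e duality, and torsion-freeness of $H^2(\mathbb{T}^3;\mathbf{Z})$ upgrades this to integral triviality. Your route is self-contained and avoids the deep Seiberg--Witten-theoretic input (Li's theorem) entirely for the free case; the paper's route buys the topological data $b^+ = 3$, $\tau = 0$, $b_1(X) = 4$, which is of the same nature as the homological input it needs again (and cannot avoid) in the effective, non-free case, where Li's Theorem 7.7 and Atiyah--Hirzebruch enter.

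Two points you leave implicit, both one-line fixes rather than gaps: (i) the functions $\sigma_{ip}$ are $S^1$-invariant --- needed for your right-hand side above to descend to $Y$ --- which follows at once from the uniqueness statement in Lemma~\ref{representation:canonical} applied to $\Phi_t^*\omega_i = \omega_i$; and (ii) the classes $[\bar\alpha^i]$ really are linearly independent in $H^1(Y;\mathbf{R})$, because any nontrivial constant combination $a_i \bar\alpha^i$ is nowhere vanishing and hence cannot be exact on a compact manifold.
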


\begin{proof}
By Lemma~\ref{alphas-pulled-back}, the forms $\alpha_i$ are $S^1$-invariant and so are pulled back to $X$ from $Y$. This means that  the symmetric $(0,2)$ tensor
    \begin{align*} 
    g:=\alpha^1\otimes \alpha^1+\alpha^2\otimes \alpha^2+\alpha^3\otimes \alpha^3
    \end{align*}
    defines a Riemannian metric on the compact manifold $Y$. The fact $\diff\alpha^i = 0$ implies $g$ is a flat Riemannian metric and $\left\{\alpha_1,\alpha_2,\alpha_3\right\}$ is a global parallel coframe. This means $\left(Y^3, g\right)$ has trivial holonomy group. By the classical theorems of Bieberbach and Auslander-Kuranish \cite{AK} (which imply $\pi_1(Y)$ is generated by three linearly independent pure translations of $\mathbb{R}^3$), or the classification of compact flat Riemannian $3$-manifolds \cite{Sc}, we conclude that $Y$ is diffeomorphic to $\mathbb{T}^3$. 

We now rule out the possibility that $X \to \mathbb{T}^3$ is a non-trivial $S^1$-bundle. We first show that $b_1(X)=4$. For this we use a deep result from 4-dimensional symplectic topology, proved in \cite{Li} which states that for a compact symplectic 4-manifold with $c_1=0$,
	\begin{itemize}
	\item $b^+ \leq 3$;
	\item $b_1 \leq 4$; and
	\item the signature $\tau$ is $-16$, $-8$ or $0$. 
	\end{itemize}
The $\omega_i$'s are 3 linearly independent closed self-dual 2-forms and so we see that $b^+ =3$. Since ${\bm v}$ is nowhere vanishing, $0= \chi = 2 - 2b_1 + b_2$ and so from $b_1 \leq 4$ we conclude $b_2 \leq 6$. This means that $\tau =0$ and so $b^-=3$, hence $b_2=6$ and so $b_1=4$. 

We now complete the proof by showing that the first Chern class of the principal $S^1$-bundle $X \to Y$ vanishes. Applying the Gysin sequence of cohomology groups (with integral coefficients) to the principal bundle $\pi \colon X \to Y$ we obtain the following long exact sequence
    \begin{align}
    \label{Gysin-sequence}
      H^1(Y)\xrightarrow{\pi^*} H^1(X) \xrightarrow{\pi_*} H^0(\mathbb{T}^3)\xrightarrow{c_1\cup~\cdot} H^2(\mathbb{T}^3) \xrightarrow{\pi^*} H^2(X)\rightarrow \cdots
    \end{align}
   where $c_1=c_1(X \to Y)\in H^2(Y)$. If $c_1\neq 0$, then the map $c_1\cup\cdot: H^0(\mathbb{T}^3)\xrightarrow{} H^2(\mathbb{T}^3)$ is injective which implies the previous map $\pi_*: H^1(X)\xrightarrow{} H^0(\mathbb{T}^3)$ is zero. This leads to a contradiction since $\pi^*: H^1(\mathbb{T}^3)\to H^1\left(X \right)$ is surjective and $b_1(\mathbb{T}^3)=3$ but $b_1(X)=4$. 
  \end{proof}

  \begin{remark}
      It is shown by Friedl and Vidussi \cite[Theorem 3]{FV} that a compact $4$-manifold admitting a free $S^1$-action supports a symplectic structure with trivial canonical class if and only if it a $\mathbb{T}^2$-bundle over $\mathbb{T}^2$. Bowden \cite{B} completes the characterisation of which symplectic $4$-manifolds admit non-trivial circle actions. It should also be noted that Hajduk and Walczak \cite[Corollary 2.1]{HW} showed that any symplectic structure on $\mathbb{T}^4$ invariant under a free linear $S^1$-action is isotopic to a standard constant coefficient symplectic structure. The above theorem can be seen as a refinement of these results in the special case of circle invariant (with effective action) hypersymplectic structure. 
  \end{remark}
    
\subsection{Linear interpolation to a hyperk\"ahler structure}

We now explain how to interpolate between an $S^1$-invariant hypersymplectic structure on $\mathbb{T}^4$ and a hyperk\"ahler structure. We will show later how to ensure this can be done without changing the cohomology classes of the $\omega_i$'s. 

Let $S^1$ act on $\mathbb{T}^4$ in the standard way that rotates the first factor, giving a trivial principal $S^1$-bundle $\pi \colon \mathbb{T}^4 \rightarrow \mathbb{T}^3$. We use the coordinate $\theta$ on the first factor of $\mathbb{T}^4$. Let $\underline\omega$ be a hypersymplectic structure invariant under this $S^1$-action, with canonical decomposition 
    \begin{align}
    \label{canonical-decomposition-on-torus}
        \omega_i
        =
        \alpha\wedge \alpha^i + \sum_{(pqr)=(123)} \sigma_{ip}\alpha^q\wedge \alpha^r, \; \; i=1,2,3,
    \end{align}  
and $\alpha,\alpha^1,\alpha^2,\alpha^3$ being positively oriented.  We have $\alpha=\diff \theta+\beta$ for some $1$-form $\beta$ pulled back from $\mathbb{T}^3$.     

Consider any constant $(3\times 3)$-matrix $\mathbf{B}$ with positive-definite symmetric part $\widehat{\mathbf{B}}$, and with its skew-symmetric part denoted by $\check{\mathbf{B}}$. The $S^1$-invariant triple $\underline\omega^{\mathbf{B}}=\left(\omega_1^\mathbf{B}, \omega_2^\mathbf{B}, \omega_3^\mathbf{B}\right)$ with 
\begin{align}
\label{B-hyperkahler}
\omega_i^\mathbf{B}
& := 
\diff \theta \wedge \alpha^i 
+
\sum_{(pqr)=(123)}\mathbf{B}_{ip}\alpha^q\wedge\alpha^r\nonumber\\
& =
\left(\diff \theta+ \check{\mathbf{B}}_{23}\alpha^1 + \check{\mathbf{B}}_{31}\alpha^2 + \check{\mathbf{B}}_{12}\alpha^3\right)\wedge \alpha^i 
+ 
\sum_{(pqr)=(123)}\widehat{\mathbf{B}}_{ip}\alpha^q\wedge\alpha^r,
\end{align}
satisfies
\begin{align*}
    \omega_i^\mathbf{B}\wedge\omega_j^\mathbf{B}
    = 
    2\widehat{\mathbf{B}}_{ij}\alpha^\mathbf{B}\wedge\alpha^1\wedge\alpha^2\wedge\alpha^3
\end{align*}
where $\alpha^\mathbf{B}=\diff \theta+ \check{\mathbf{B}}_{23}\alpha^1 + \check{\mathbf{B}}_{31}\alpha^2 + \check{\mathbf{B}}_{12}\alpha^3$, and therefore $\underline\omega^\mathbf{B}$ is a hyperk\"ahler structure on $\mathbb{T}^4$. This is because the $Q$-matrix for $\underline\omega^\mathbf{B}$ is $\widehat{\mathbf{B}}/\left(\det \mathbf{B}\right)^\frac{1}{3}$ is a \emph{constant} matrix on $\mathbb{T}^4$. So after acting on the $\omega_i^{\mathbf{B}}$ by a constant matrix $P \in \SL(3,\mathbf{R})$ we obtain a triple $\tilde{\omega}_i^{\mathbf{B}} = P_{ij} \omega_j^{\mathbf{B}}$ for which $\tilde{\omega}_i^{\mathbf{B}} \wedge \tilde{\omega}_j^{\mathbf{B}} = 2\delta_{ij} \mu$ for some volume form $\mu$. It is a standard fact that this implies that $\left(\tilde{\omega}_{1}^{\mathbf{B}}, \tilde{\omega}_{2}^{\mathbf{B}}, \tilde{\omega}_{3}^{\mathbf{B}}\right)$ is a hyperk\"ahler triple. (See, e.g., \cite[Propositions 3.7 and 3.9]{FY}.)

\begin{proposition}\label{linear-interp}
The linear interpolation
\begin{equation}
    \underline\omega^\mathbf{B}(s)
    := 
    \left(1-s\right)\underline\omega
    + 
    s\underline\omega^\mathbf{B}, \; s\in [0,1],
\end{equation}
is a path of hypersymplectic structures joining $\underline{\omega}$ to the hyperk\"ahler structure $\underline{\omega}^{\mathbf{B}}$.
\end{proposition}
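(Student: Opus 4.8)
The plan is to show that the entire interpolated family retains the canonical shape of Lemma~\ref{representation:canonical}, so that hypersymplecticity reduces to a single pointwise positivity statement about a $3\times 3$ matrix. First I would substitute $\alpha = \diff\theta + \beta$ into \eqref{canonical-decomposition-on-torus} and combine it with \eqref{B-hyperkahler}. The decisive structural feature is that the two triples $\underline\omega$ and $\underline\omega^{\mathbf B}$ are built from the \emph{same} 1-forms $\alpha^1,\alpha^2,\alpha^3$, so the linear combination collapses to
\[
\omega_i^{\mathbf B}(s) = \alpha(s)\wedge\alpha^i + \sum_{(pqr)=(123)} M_{ip}(s)\,\alpha^q\wedge\alpha^r,
\]
where $\alpha(s) := \diff\theta + (1-s)\beta$ and $M(s) := (1-s)\sigma + s\mathbf B$. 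Here $\alpha(s)(\bm v)=1$, so $\alpha(s)$ is again a legitimate connection-type 1-form of the same kind, and the family $\underline\omega^{\mathbf B}(s)$ is literally presented in canonical form, now with a (not necessarily symmetric) coefficient matrix $M(s)$.

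Next I would compute the pairwise wedge products exactly as in the proof of Lemma~\ref{lemma:metric-tensor}. The terms containing no factor of $\alpha(s)$ are wedges of four 1-forms drawn from the three $\alpha^i$ and therefore vanish, while the two cross terms assemble to
\[
\omega_i^{\mathbf B}(s)\wedge\omega_j^{\mathbf B}(s) = \bigl(M_{ij}(s)+M_{ji}(s)\bigr)\,\alpha(s)\wedge\alpha^{123} = 2\,\widehat M_{ij}(s)\,\alpha\wedge\alpha^{123},
\]
where $\widehat M(s)$ denotes the symmetric part of $M(s)$. In the last equality I used $\beta\wedge\alpha^{123}=0$ (both $\beta$ and $\alpha^{123}$ are pulled back from $\mathbb{T}^3$), so that the volume form $\alpha(s)\wedge\alpha^{123}=\alpha\wedge\alpha^{123}$ is independent of $s$ and positive. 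Crucially $\widehat M(s) = (1-s)\sigma + s\,\widehat{\mathbf B}$, since $\sigma$ is symmetric and $\widehat{\mathbf B}$ is the symmetric part of $\mathbf B$; the skew part $\check{\mathbf B}$ feeds only into the shifted connection form $\alpha(s)$ and drops out of every wedge product.

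It then remains to check that each nonzero combination $a^i\omega_i^{\mathbf B}(s)$ is symplectic. Closedness is automatic, as it is a constant-coefficient combination of the closed forms $\omega_i$ and $\omega_i^{\mathbf B}$. For non-degeneracy I would compute $\bigl(a^i\omega_i^{\mathbf B}(s)\bigr)\wedge\bigl(a^j\omega_j^{\mathbf B}(s)\bigr) = 2\,a^{T}\widehat M(s)\,a\;\alpha\wedge\alpha^{123}$, so everything hinges on $\widehat M(s)$ being positive-definite for every $s\in[0,1]$. This is the one genuine point, and it is immediate from convexity: $\sigma>0$ by Lemma~\ref{lemma:metric-tensor} and $\widehat{\mathbf B}>0$ by hypothesis, and the cone of positive-definite symmetric matrices is convex, so the convex combination $(1-s)\sigma + s\,\widehat{\mathbf B}$ remains positive-definite throughout. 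Hence $a^{T}\widehat M(s)\,a>0$ for all $a\neq 0$, each $a^i\omega_i^{\mathbf B}(s)$ is non-degenerate with the correct orientation, and $\underline\omega^{\mathbf B}(s)$ is hypersymplectic.

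The only subtlety to watch—what could superficially look like the obstacle—is that $M(s)$ fails to be symmetric once $\check{\mathbf B}\neq 0$, so one cannot literally quote the identity $\omega_i\wedge\omega_j = 2\sigma_{ij}\,\alpha\wedge\alpha^{123}$ of Lemma~\ref{lemma:metric-tensor}. The resolution, already built into the computation above, is that the antisymmetric part of $M(s)$ is exactly what promotes $\diff\theta$ to the shifted connection form $\alpha(s)$, after which only the symmetric part survives in the volume pairing; this is the same bookkeeping displayed in \eqref{B-hyperkahler}. With that observed, the argument is a direct pointwise linear-algebra verification and requires no analysis.
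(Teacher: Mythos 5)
Your proof is correct and follows essentially the same route as the paper: write the interpolated triple in the canonical form of Lemma~\ref{representation:canonical}, compute the pairwise wedge products to obtain $2\left(\left(1-s\right)\sigma + s\widehat{\mathbf{B}}\right)_{ij}$ times the fixed positive volume form $\diff\theta\wedge\alpha^{123}$, and conclude by convexity of the cone of positive-definite symmetric matrices. The only difference is bookkeeping: the paper absorbs $\check{\mathbf{B}}$ into the shifted connection form $\alpha^{\mathbf{B}}(s)$ so that the coefficient matrix stays symmetric, while you keep it in $M(s)$ and observe that only the symmetric part survives the wedge pairing --- the same computation, as the paper's own display~\eqref{B-hyperkahler} already shows.
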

\begin{proof}
The path $\underline\omega^\mathbf{B}(s)$ is explicitly given by
\begin{align}
\omega_i^\mathbf{B}(s)
& = 
\left( \left(1-s\right)\alpha + s\left(\diff\theta+ \check{\mathbf{B}}_{23}\alpha^1 + \check{\mathbf{B}}_{31}\alpha^2 + \check{\mathbf{B}}_{12}\alpha^3\right)\right)\wedge \alpha^i \nonumber \\
&\phantom{=}\qquad
+ 
\sum_{(pqr)=(123)}\left( \left(1-s\right)\sigma_{ip}+ s\widehat{\mathbf{B}}_{ip}\right)\alpha^q\wedge\alpha^r,
\end{align}
which is a path of triples of $S^1$-invariant closed $2$-forms on $\mathbb{T}^4$. Notice that
\begin{align*}
\omega_i^\mathbf{B}(s)\wedge\omega_j^\mathbf{B}(s)
= 
2\sigma_{ij}^\mathbf{B}(s)\alpha^\mathbf{B}(s)\wedge\alpha^1\wedge\alpha^2\wedge\alpha^3,
\end{align*}
where $\sigma^\mathbf{B}(s):=\left(1-s\right)\sigma+s\widehat{\mathbf{B}}>0$ as $\sigma$ and $\widehat{\mathbf{B}}$ are both symmetric and positive-definite everywhere and $\alpha^\mathbf{B}(s):=\left(1-s\right)\alpha + s\left(\diff\theta+ \check{\mathbf{B}}_{23}\alpha^1 + \check{\mathbf{B}}_{31}\alpha^2 + \check{\mathbf{B}}_{12}\alpha^3\right)$ satisfy
\[ 
\alpha^\mathbf{B}(s)\wedge\alpha^1\wedge\alpha^2\wedge\alpha^3
=
\diff \theta\wedge\alpha^1\wedge\alpha^2\wedge\alpha^3>0
\]
with respect to the orientation determined by $\underline\omega$.
\end{proof}

\subsection{Preserving the cohomology classes}\label{choice-of-B}
In order for $\underline\omega^\mathbf{B}(s)$ to stay inside the cohomology class of $\underline\omega$, we need to choose $\mathbf{B}$ suitably. 

\begin{definition}[Period of hypersymplectic structure]
    The \emph{period} of a hypersymplectic structure $\underline\omega=\left(\omega_1, \omega_2, \omega_3\right)$ on $X^4$ is defined to be $\left( [\omega_1], [\omega_2], [\omega_3]\right)\in H^2\left(X^4;\mathbf{R}\right)\otimes \mathbf{R}^3$.
\end{definition}

In the particular case $X^4=\mathbb{T}^4$, let 	
\[
g_0
=\mathrm{d}\theta\otimes \mathrm{d}\theta 
+ 
\alpha^1\otimes \alpha^1 + \alpha^2\otimes \alpha^2+\alpha^3\otimes \alpha^3
\]
be a flat Riemannian metric on $X$, then 
\[
\mathrm{d}\theta \wedge \alpha^1, \;
\mathrm{d}\theta \wedge \alpha^2, \;
\mathrm{d}\theta \wedge \alpha^3,\;
\alpha^2\wedge\alpha^3,\;
\alpha^3\wedge \alpha^1,\;
\alpha^1\wedge \alpha^2
\]
forms a basis for the space of real harmonic $2$-forms which can be seen by considering the matrix of their cup-products. We now compute the following:
\begin{align*}
\left(\left[\omega_i\right]\cup \left[\mathrm{d}\theta\wedge \alpha^j\right], \left[\mathbb{T}^4\right]\right)
& = 
\int_{\mathbb{T}^4} \sigma_{ij}\mathrm{d}\theta \wedge \alpha^{123}
+ 
\int_{\mathbb{T}^4} \mathrm{d}\theta \wedge\alpha\wedge\alpha^i\wedge\alpha^j,\; i,j=1,2,3, \\
\left(\left[\omega_i^\mathbf{B}\right]\cup \left[\mathrm{d}\theta\wedge\alpha^j\right], \left[\mathbb{T}^4\right]\right)
& = 
\mathbf{B}_{ij}\int_{\mathbb{T}^4} \mathrm{d}\theta \wedge\alpha^{123},\; i,j=1,2,3,\\
\left( \left[\omega_i\right]\cup \left[\alpha^q\wedge\alpha^r\right], \left[\mathbb{T}^4\right]\right)
& = 
\delta_{ip}\int_{\mathbb{T}^4} \mathrm{d}\theta \wedge \alpha^{123}, \; i=1,2,3,\; (pqr)=(123),\\
\left( \left[\omega_i^\mathbf{B}\right]\cup \left[\alpha^q\wedge\alpha^r\right], \left[\mathbb{T}^4\right]\right)
& = 
\delta_{ip}\int_{\mathbb{T}^4} \mathrm{d}\theta \wedge\alpha^{123},\; i=1,2,3, \; (pqr)=(123).
\end{align*}
These computations (together with the non-degeneracy of the cup product on $H^2$) show that we can ensure that
\[
\left(\left[\omega_1\right],\left[\omega_2\right],\left[\omega_3\right]\right) 
= 
\left( \left[\omega_1^\mathbf{B}\right],\left[\omega_2^\mathbf{B}\right],\left[\omega_3^\mathbf{B}\right]\right)
\]
if we simply set 
\begin{equation}
\label{def:B}
\mathbf{B}_{ij}
= 
\left(\int_{\mathbb{T}^4} \sigma_{ij}\mathrm{d}\theta \wedge \alpha^{123}
+ 
\int_{\mathbb{T}^4} \mathrm{d}\theta \wedge\alpha\wedge\alpha^i\wedge\alpha^j\right)/\int_{\mathbb{T}^4}\mathrm{d}\theta\wedge\alpha^{123}.
\end{equation}
Moreover, the matrix $\widehat{\mathbf{B}}$ is positive definite since
\[
2\widehat{\mathbf{B}}_{ij}
= \mathbf{B}_{ij}
+ 
\mathbf{B}_{ji}
= 
2\int_{\mathbb{T}^4}\sigma_{ij}\mathrm{d}\theta \wedge\alpha^{123}/\int_{\mathbb{T}^4}\mathrm{d}\theta\wedge\alpha^{123}
\]
and $\left(\sigma_{ij}\right)>0$ on $\mathbb{T}^4$. 

At this point, we have proved Theorem~\ref{main-theorem} in the case when the $S^1$-action is free.

\begin{remark}
      One hypersymplectic structure may be invariant under more than one free $S^1$-actions (as shown in the Example \ref{example} of hypersymplectic structures in symmetric normal form). The construction above will lead to isotopies to possibly different hyperk\"ahler structures (in the fixed cohomology class) if we are using different actions.
  \end{remark}

\subsection{An effective action must be free}
Let $X$ be a smooth compact $4$-manifold with an effective circle action $\left\{\Phi_t\right\}_{t\in \mathbb{T}^1}$,  that is, there exists an injective homomorphism $\Phi:\mathbb{T}^1\rightarrow\Diff\left(X\right)$ sending $t$ to $\Phi_t$, and $\underline\omega$ is a smooth hypersymplectic structure invariant under this circle action. Let $\bm v$ be the vector field generating such action.

\begin{lemma}
\label{lemma:isolated-zero}
The zeros of $\bm v$ are isolated, and the index of each zero is $1$.
\end{lemma}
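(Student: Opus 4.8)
The plan is to analyze the local behavior of the hypersymplectic structure near a fixed point of the $S^1$-action, using the fact that the action preserves each $\omega_i$ and hence preserves the metric $g_{\underline\omega}$. The key idea is that at a fixed point $p$, the action linearizes: the isotropy representation gives a homomorphism $S^1 \to \SO(T_pX, g_{\underline\omega}) \cong \SO(4)$, and since the action is effective and $\bm v(p)=0$, this representation is non-trivial. I would first argue that $p$ must be an \emph{isolated} zero. If not, the fixed-point set would contain a curve through $p$, forcing the isotropy representation to fix a nonzero vector in $T_pX$; I would then derive a contradiction with the non-degeneracy built into the hypersymplectic condition.

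\emph{Isolatedness.} Concretely, since $S^1$ preserves each $\omega_i$, it acts on $T_pX$ by symplectomorphisms of each $\omega_i|_p$ simultaneously. The hypersymplectic condition means the $\omega_i|_p$ span a maximal positive subspace of $\Lambda^2 T_p^*X$ for the wedge form, which identifies $(T_pX, g_{\underml\omega,p})$ with the quaternions $\mathbb{H}$ so that $\omega_1, \omega_2, \omega_3$ become the standard Kähler forms of $i,j,k$. The isotropy representation therefore lands in $\Sp(1) \cong \SU(2)$, acting by a circle $\theta \mapsto \mathrm{diag}(e^{in\theta}, e^{-in\theta})$ after diagonalizing. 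For the action to be effective we need $n \neq 0$, and such a representation fixes only the origin in $\mathbb{H} = \C^2$; hence the linearized action has $p$ as an isolated fixed point, and by the equivariant slice theorem (Bochner linearization, using the invariant metric) the same holds for the action itself in a neighborhood of $p$. Thus zeros of $\bm v$ are isolated.

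\emph{Index computation.} For the index, I would compute the local degree of $\bm v$ near the isolated zero $p$ directly from the linearization. In the coordinates above, the generating vector field of $\theta \mapsto \mathrm{diag}(e^{in\theta}, e^{-in\theta})$ on $\C^2$ is $\bm v = n(i z_1 \partial_{z_1} - i z_2 \partial_{z_2}) + \text{c.c.}$, a linear vector field whose linearization at the origin is an invertible skew-symmetric (in fact, complex-structure-preserving) endomorphism $A$ of $T_pX$. The index of $\bm v$ at $p$ equals the sign of $\det A$. Since $A$ is an infinitesimal rotation preserving the orientation of $X$ (the action preserves the hypersymplectic orientation), $\det A > 0$, so the index is $+1$. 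I would make this explicit by noting that $A$ acts on the two complex eigenplanes as rotation by angle proportional to $\pm n$, contributing a factor of $+1$ to the index from each oriented real $2$-plane.

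The main obstacle I anticipate is pinning down that the isotropy representation really does land in $\Sp(1)$ rather than just $\SO(4)$, which is where the \emph{hypersymplectic} (as opposed to merely symplectic) hypothesis does the essential work. The subtlety is that at a single point the three forms $\omega_i|_p$ need not be mutually compatible with a single complex structure in the naive sense; rather one must use the pointwise algebra of the positive-definite matrix $Q$ from Lemma~\ref{lemma:metric-tensor} to diagonalize and recognize the standard quaternionic triple up to an $\SL(3,\R) \times \GL(T_pX)$ change of frame (as in Remark~\ref{SL3R-action}). Once the standard model is in place the index count is routine, so the real content is this pointwise normalization and the verification that it is compatible with the $S^1$-action preserving all three forms.
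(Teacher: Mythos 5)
Your proposal is correct and takes essentially the same route as the paper: both use invariance of $g_{\underline\omega}$ and of the pointwise-normalized triple $\theta_i = Q_{ij}^{-1/2}\omega_j$ to place the isotropy representation at a fixed point inside $\SU(2)$, diagonalize it as $\mathrm{diag}(t^n,t^{-n})$, and invoke effectiveness (via linearization of the isometric action) to rule out the trivial weight. The only immaterial difference is that the paper pins down $n=1$ exactly by the geodesic/continuity argument, whereas you stop at $n\neq 0$ and compute the index as $\mathrm{sign}(\det A)=+1$, which indeed suffices for the statement.
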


\begin{proof}
    Let $p$ be a zero of $\bm v$, then $\diff \Phi_t|_p: T_p X\to T_p X$ is an isometry for any $t\in \mathbb{T}^1$ when $T_p X$ is equipped with the metric $g_{\underline\omega}|_p$. Moreover, $T_p X$ is naturally oriented by $\underline\omega$ and $\diff\Phi_t|_p$ must preserve the orientation. Since $\diff\Phi_t|_p$ preserves $\underline\omega|_p$, it must also preserve the triple \[\theta_i|_p = Q_{ij}^{-\frac{1}{2}}\omega_j|_p\] 
which is hyperk\"ahler at $p$. This further implies that $\left\{ \diff\Phi_t|_p\right\}_{t\in \mathbb{T}^1}$ is a subgroup of $\SU(2)$. It follows that there is an integer $n \in \N$ and an isomorphism $T_pX \cong \C^2$ of $\SU(2)$-representations, in which
\[
\diff \Phi_t|_p = \begin{pmatrix} t^n & 0 \\ 0 & t^{-n} \end{pmatrix}.
\]

To prove the lemma we must show that $n=1$. If $n \neq 1$ then there is some $t \neq \id$ with $\diff \Phi_t|_p = \id$. So $\Phi_t$ fixes both $p$ and the tangent space at $p$ and so it fixes all geodesics emanating from $p$. This forces $\Phi_t = \id$ (by a continuity argument) contradicting the effectiveness of the action.
%
%    If $q \in X$ is a sufficiently close point distinct from $p$, more precisely we need $d_{g_{\underline\omega}}\left(q,p\right)<\text{inj}_p$, that is also invariant under some $\Phi_t$ which is not identity, then the minimal geodesic segment $\gamma$ (with respect to the metric $g_{\underline\omega}$) connecting $p$ and $q$ is also invariant under $\Phi_t$ since the action preserves the metric and minimal geodesic segment is unique. This implies $\diff \Phi_t|_p \left(\gamma'(0)\right)=\gamma'(0)$ which is contradiction. This proves that a sufficiently small neighborhood of $p$ contain only one fixed point $p$ for any $\Phi_t$ which is not identity, and therefore zeros of $\bm v$ must be isolated. 
%
%    The above argument also shows that the local index of $\bm v$ at each of its zero must be $1$. 
    \end{proof}

\begin{proposition}
Let $X$ be a smooth compact 4-manifold, with a hypersymplectic structure $\underline{\omega}$ that is invariant under an effective $S^1$-action. Then in fact the action is free.
\end{proposition}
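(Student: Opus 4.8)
The plan is to show that every isotropy subgroup is trivial. For a smooth circle action the stabiliser of a point is a closed subgroup of $S^1$, hence either trivial (a free orbit), a nontrivial finite cyclic group $\Z/k$ (an exceptional orbit), or all of $S^1$ (a fixed point, i.e.\ a zero of $\bm v$). Thus it suffices to rule out the latter two cases, and the proof splits accordingly.

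First I would exclude exceptional orbits by reusing the argument closing the proof of Lemma~\ref{lemma:isolated-zero}. Suppose $x$ has stabiliser $\Z/k$ with $k\ge 2$, and let $g=\Phi_{1/k}$ generate it. Then $g$ fixes $x$, so $\diff g|_x$ is an isometry of $(T_xX,g_{\underline\omega}|_x)$ preserving the orientation and the pointwise hyperk\"ahler triple $\theta_i|_x=Q_{ij}^{-\frac{1}{2}}\omega_j|_x$; exactly as in Lemma~\ref{lemma:isolated-zero} it therefore lies in $\SU(2)$ acting on $T_xX\cong\C^2$. Because $g$ commutes with the flow of $\bm v$ we have $\diff g|_x(\bm v_x)=\bm v_x$, while $\bm v_x\ne 0$ since $x$ is not a fixed point. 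But a nontrivial element of $\SU(2)$ fixes no nonzero vector of $\C^2$, forcing $\diff g|_x=\id$; an isometry fixing $x$ with identity differential there is the identity, contradicting $g\ne\id$ (as the action is effective). Hence there are no exceptional orbits, and the only points with nontrivial stabiliser are the zeros of $\bm v$.

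It remains to show $\bm v$ has no zeros. By Lemma~\ref{lemma:isolated-zero} every zero $p$ is isolated, and in suitable complex coordinates $T_pX\cong\C^2$ the circle acts with weights $(1,-1)$, i.e.\ $\Phi_\theta=\operatorname{diag}(e^{i\theta},e^{-i\theta})$. I would insert this data into the Atiyah--Singer $G$-signature theorem. Having excluded exceptional orbits, for $\theta$ an irrational multiple of $2\pi$ the fixed-point set of $g=\Phi_\theta$ is precisely the zero set $F$ of $\bm v$, and the localisation formula expresses $\operatorname{sign}(g,X)$ as a sum of local contributions, one per fixed point. Since all weights are $(1,-1)$, each fixed point contributes the same nonconstant function of $\theta$ (proportional to $\cot^2(\theta/2)$), so $\operatorname{sign}(g,X)=c\,\#F\,\cot^2(\theta/2)$ for a fixed nonzero constant $c$. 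On the other hand $S^1$ is connected, hence acts trivially on $H^*(X;\R)$, so $\operatorname{sign}(g,X)=\tau$ is independent of $\theta$. Comparing the two expressions forces $\#F=0$. Combined with the previous paragraph, every stabiliser is trivial and the action is free.

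I expect the fixed-point case to be the main obstacle: ruling out exceptional orbits is a direct adaptation of Lemma~\ref{lemma:isolated-zero}, whereas excluding fixed points requires genuinely global input. The cleanest route is the $G$-signature localisation above, which plays the forced weights $(1,-1)$ (making each hypothetical fixed point contribute a $\theta$-dependent term) against the $\theta$-independence of $\operatorname{sign}(g,X)$ coming from connectedness of $S^1$. As a consistency check, Poincar\'e--Hopf together with Lemma~\ref{lemma:isolated-zero} gives $\chi(X)=\#F=0$, in agreement with $X\cong\mathbb{T}^4$.
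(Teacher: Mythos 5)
Your proof is correct, but it takes a genuinely different route from the paper's for the global step, and in one respect it is more complete. The paper's proof is homological: Theorem 7.7 of \cite{Li} pins down the rational homology of a symplectic Calabi--Yau $4$-manifold ($\mathbb{T}^2$-bundle over $\mathbb{T}^2$, Enriques, or K3); $b_+=3$ kills Enriques, the Atiyah--Hirzebruch theorem \cite{AH} kills K3, so $\chi(X)=0$, and then Lemma~\ref{lemma:isolated-zero} plus Poincar\'e--Hopf shows $\bm v$ has no zeros. You instead feed the local model of Lemma~\ref{lemma:isolated-zero} (isolated fixed points, weights $(1,-1)$) into the Atiyah--Singer $G$-signature localization: each fixed point contributes $\cot^2(\theta/2)$, while connectedness of $S^1$ forces $\operatorname{sign}(\Phi_\theta,X)=\tau$ to be constant in $\theta$, so the fixed set is empty (and $\tau=0$ falls out). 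This trades Li's deep symplectic result for classical index theory, which is also the engine behind \cite{AH}; the paper's route has the advantage of reusing an input (\cite{Li}) it already needs for Theorem~\ref{thm:standardization}. A genuine merit of your write-up is the explicit exclusion of exceptional orbits: a nontrivial element of a finite stabiliser acts on $T_xX$ through $\SU(2)$ yet fixes $\bm v_x\neq 0$, hence is the identity. This step is needed to pass from ``no zeros of $\bm v$'' to ``free action'', and the paper leaves it implicit. One simplification is available to you: for $\theta$ an irrational multiple of $2\pi$, any point fixed by $\Phi_\theta$ has closed, dense, hence full stabiliser, so its fixed set equals the zero set of $\bm v$ without first excluding exceptional orbits.
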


\begin{proof}
\begin{comment}
   The general Betti numbers constraint, there are three possibilities 
    \begin{itemize}
        \item $\tau=0$, $b_+=b_-=3$, $b_1=4$ and $\chi=0$;
        \item $\tau=-8$, $b_+=3, b_-=11$, $b_1=2$ and $\chi=12$;
        \item $\tau=-16$, $b_+=3, b_-=19$, $b_1=0$ and $\chi=24$. (Hemeomorphic to $K3$)
    \end{itemize}
\end{comment}

Theorem 7.7 of \cite{Li} shows that the compact symplectic 4-manifold with $c_1=0$ has the same rational homology as one of the following: a $\mathbb{T}^2$-bundle over $\mathbb{T}^2$, an Enriques surfaces or a $K3$ surface. The existence of a hypersymplectic structure implies $b_+=3$ and thus the Enriques surfaces are ruled out. Meanwhile, a theorem of Atiyah--Hirzebruch \cite{AH} asserts that a spin 4-manifold with an effective $S^1$-action has vanishing signature. This rules out the case of a $K3$ surface. So $X$ must have the same rational homology as a $\mathbb{T}^2$-bundle over $\mathbb{T}^2$ and in particular it must have zero Euler characteristic. In view of Lemma~\ref{lemma:isolated-zero}, the Poincar\'e-Hopf Index Theorem implies that $\bm v$ has no zeros and so the action is free as claimed.
\end{proof}

This Proposition completes the proof of Theorem~\ref{main-theorem}.
%
%\begin{theorem}
%    Let $X^4$ be a compact smooth $4$-manifold admitting a smooth hypersymplectic structure which is invariant under a smooth effective $S^1$-action, then $X$ is diffeomorphic to $\mathbb{T}^4$, and $\underline\omega$ can be linearly isotopic to a hyperk\"ahler structure through a family of hypersymplectic structures inside the same cohomology class.
%\end{theorem}
%
%
%Note that the path given by hypersymplectic flow introduced in \cite{FY} provides another away of connecting $\mathbb{T}^3$-invariant hypersymplectic structures to hyperk\"ahler structures via a path of isotopic hypersymplectic structures on $\mathbb{T}^4$.

\end{document}